\DeclareMathAlphabet{\mathcal}{OMS}{cmsy}{m}{n}
\theoremstyle{plain}
\newtheorem{Thm}{Theorem}
\newtheorem{Lem}[Thm]{Lemma}
\newtheorem{Cor}[Thm]{Corollary}
\newtheorem{Prop}[Thm]{Proposition}
\theoremstyle{definition}
\newtheorem*{Rem}{Remark}
\theoremstyle{remark}
\def\F{\mathbb{F}}
\def\cp{\hbox{${\mathbb{CP}}^2$}}
\def\cpb{\hbox{$\overline{{\mathbb{CP}}^2}$}}
\def\osigma{\overline{\sigma}}
\def\mod{\mathrm{Mod}}
\title{Branched covers and pencils on hyperelliptic Lefschetz fibrations}
\author{Terry Fuller}
\address{Department of Mathematics,
California State University, Northridge, 
Northridge, CA 91330 } 
\email{terry.fuller@csun.edu}
\begin{document}
\begin{abstract}
For every fixed $h \geq 1$, we construct an infinite family of simply connected symplectic 4-manifolds $X'_{g,h}[i]$, for all $g>h$ and $0\leq i < 2p-1$, where $p=\lfloor \frac{g+1}{h+1} \rfloor$. Each manifold $X'_{g,h}[i]$ is the total space of a symplectic genus $g$ Lefschetz pencil constructed by an explicit monodromy factorization. We then show that each $X'_{g,h}[i]$ is diffeomorphic to a complex surface that is a fiber sum formed from two standard examples of hyperelliptic genus $h$ Lefschetz fibrations, here denoted $Z_h$ and $H_h$. Consequently, we see that $Z_h, H_h,$ and all fiber sums of them admit an infinite family of explicitly described Lefschetz pencils, which we observe are different from families formed by the degree doubling procedure. \end{abstract}
\maketitle
\section{Introduction}

This paper is a sequel to \cite{BHM:Unchaining} and \cite{F:Unchaining}. In the former work,  R. \.{I}nan\c{c} Baykur, Kenta Hayano, and Naoyuki Monden introduced a family of symplectic 4-manifolds $X_g'[i]$ for all $g\geq 3$ and $0\leq i \leq g-1$, each of which admits a genus $g$ Lefschetz pencil with $2(i+1)$ base points. In \cite{BHM:Unchaining} they showed that this family has many interesting properties, among them potential exotic symplectic Calabi-Yau manifolds. (Here, ``exotic'' means not diffeomorphic to the standard K3 surface.) In \cite{F:Unchaining}, the author proved that in fact all of their examples are diffeomorphic to elliptic surfaces, and in particular that the symplectic Calabi-Yau manifolds in their list are standard. Although the Baykur-Hayano-Monden examples failed to yield new manifolds, the result in \cite{F:Unchaining} revealed a rich collection of pencil structures on elliptic surfaces, described in an explicit way, as the manifolds  $X_g'[i]$ were defined concretely through monodromy factorizations. 

In the study of Lefschetz fibrations and pencils, hyperelliptic ones have been a particular object of focus, in part because they can be viewed as generalizations of genus 1 and 2 fibrations, where classification results exist. (See \cite{Moishezon}, \cite{Chakiris}, \cite{Smith_Hodge}, \cite{ST_Annals}.) It is known that every hyperelliptic Lefschetz fibration is a 2-fold branched cover of a rational surface (\cite{Hyperelliptic}, \cite{ST_CCM}), allowing methods that will be at the core of the present article. Bernd Siebert and Gang Tian conjecture that every hyperelliptic Lefschetz fibration over $S^2$ without reducible fibers is holomorphic. They have settled this in fiber genus 2 for fibrations with transitive monodromy (\cite{ST_Annals}), and in this case it follows that every such fibration is equivalent to a fiber sum of two particular examples. The generalizations of these specific examples to arbitrary genus are studied in this paper. 

These examples, to be denoted $Z_h$ and $H_h$, arise from the well-known odd chain and hyperelliptic relations in the mapping class group $\mod(\Sigma_h)$ of a closed genus $h$ surface. In Section~\ref{sec:hyperelliptic} we define these manifolds, and review their topology; for $H_h$, we also derive an equivalent monodromy factorization. Next, in Section~\ref{sec:manifolds}, we generalize the mapping class group factorizations of Baykur, Hayano, and Monden from \cite{BHM:Unchaining}. By performing unchaining surgeries on this factorization we construct for all fixed $h \geq 1$ a family $X_{g,h}'[i]$ of symplectic Lefschetz pencils, for all $g>h$ and $0 \leq i \leq 2p-1$, where $p=\lfloor \frac{g+1}{h+1} \rfloor$. (The original Baykur-Hayano-Monden examples correspond to $h=1$.) Our main theorem appears in Section~\ref{sec:theorem}, where we generalize the diffeomorphisms constructed in \cite{F:Unchaining} to show that the total space of all of these pencils are diffeomorphic to complex surfaces obtained as fiber sums of the hyperelliptic examples $Z_h$ and $H_h$. (The number of summands are determined by the divisor and remainder when $2g+2$ is divided by $2h+2$.) As was the case in \cite{F:Unchaining}, the proof relies on using Lefschetz fibration and pencil structures to depict the manifolds as 2-fold branched covers of rational surfaces; the diffeomorphism is described entirely in the base of these covers, using traditional Kirby calculus enhanced with moves that modify the branch set by isotopy. As a consequence, $Z_h, H_h,$ and all fiber sums of them an admit infinite family of explicitly described Lefschetz pencils. In the final section, we make some remarks comparing these families to those produced by degree doubling, and speculate on possible applications to open questions in smooth and symplectic 4-manifold topology.

We assume the reader is familiar with the basic topology of Lefschetz fibrations and pencils on symplectic 4-manifolds. The preeminent reference for this topic is \cite{GompfStipsicz}. Careful summaries of background most relevant here also appear in the introductory sections of \cite{BHM:Unchaining} and \cite{F:Unchaining}.

We denote a genus $g$ surface with $m$ marked points and $n$ boundary components by $\Sigma_{g,m}^n$. The mapping class group of  $\Sigma_{g,m}^n$ will be denoted as $\mod(\Sigma_{g,m}^n)$. If $m$ or $n$ are omitted, they are assumed to be zero. 

\section{Two standard hyperelliptic Lefschetz fibrations} \label{sec:hyperelliptic}

In this section, we survey the basic properties of two families of hyperelliptic Lefschetz fibrations. The curves $c_i$ are indicated in Figure~\ref{fig:Sigma_h_c}.
\begin{figure}[ht]
\begin{center}
\includegraphics[scale=1]{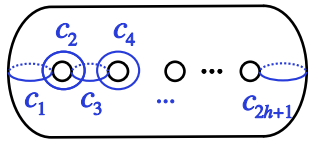}
\caption{The curves $c_1, \ldots, c_{2h+1}$}
\label{fig:Sigma_h_c}
\end{center}
\end{figure}

\subsection{The manifolds $Z_h$}
Let $Z_h$ denote the total space of the genus $h$ Lefschetz fibration with monodromy given by the full odd chain relation $(t_{c_1} t_{c_2} \cdots t_{c_{2h+1}})^{2h+2}=1$ in $\mod(\Sigma_h)$. These manifolds are simply connected, with Euler characteristic $e(Z_h)=2(2h^2+h+3)$ and signature $\sigma(Z_h)= -2(h+1)^2$ (\cite{EndoNagami}). The odd chain relation lifts to the relation $(t_{c_1} t_{c_2} \cdots t_{c_{2h+1}})^{2h+2}=t_\delta t_{\delta'}$ in $\mod(\Sigma_h^2)$, where $\delta$ and $\delta'$ are curves parallel to the boundary components. Consequently, the fibration on $Z_h$ admits two sections of square $-1$, and blowing these down results in a pencil on a manifold denoted $Z_h'$. The manifolds $Z_h$ and $Z_h'$ are complex surfaces of general type for $h \geq 2$, and $Z_h'$ is spin if and only if $h$ is even.

Let $C_n$ denote the ribbon surface in Figure~\ref{fig:Cbraid}.
\begin{figure}[ht]
\begin{center}
\includegraphics[scale=.9]{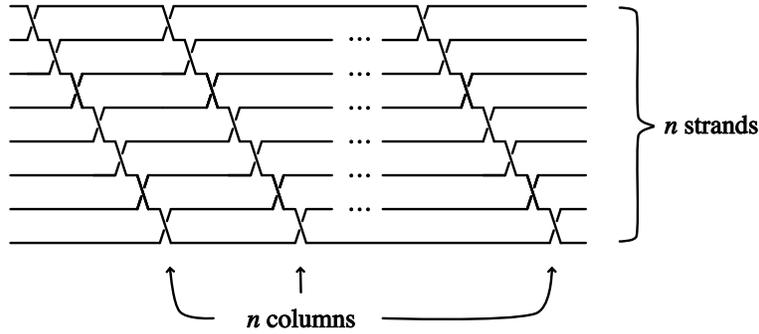}
\caption{The ribbon surface $C_n$}
\label{fig:Cbraid}
\end{center}
\end{figure}
Using the methods of \cite{Hyperelliptic}, we can describe $Z_h$ as the 2-fold branched cover of $\cp\#\cpb$ branched over the surface in Figure~\ref{fig:Z_h}.
\begin{figure}[ht]
\begin{center}
\includegraphics[scale=.9]{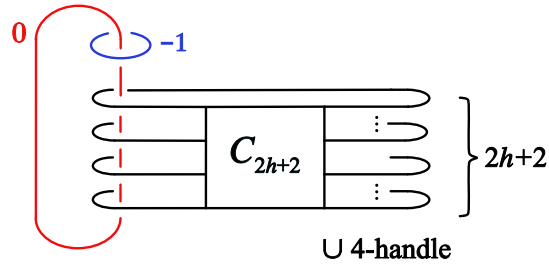}
\caption{The 2-fold branched cover is $Z_h$.}
\label{fig:Z_h}
\end{center}
\end{figure}
Briefly summarized, in this Figure the branch surface is a closed surface described as a banded unlink diagram, so that the full surface consists of the visible ribbon surface in Figure~\ref{fig:Z_h} union $2h+2$ invisible disks in the 4-handle attached to the boundary of the ribbon surface. The framing of the $-1$-framed 2-handle, as well as the fact that it does not intertwine with the ribbon branch surface, follows by considering the projection of the odd chain relation to the mapping class group $\mod(\Sigma_{0,2h+2})$ under the 2-fold branched cover $\Sigma_h \to \Sigma_{0,2h+2}$. Each Dehn twist $t_{c_i}$ projects to a standard generator $\sigma_i$ of $\mod(\Sigma_{0,2h+2})$ given by a right-handed disk twist about an arc that directly connects the $i$th and $(i+1)$st marked points. Let $D_{2h+2}^2$ be a disk in $\Sigma_{0,2h+2}$ that contains all of the marked branch points, and let $\ast$ be a point in the complement of  $D_{2h+2}^2$. The handle attachment can be understood by tracking a framed neighborhood of $\ast$ through an isotopy of the projection to the identity. (See \cite{F:Unchaining} for a more complete explanation of this detail.)
Here, the projection $(\sigma_1 \cdots \sigma_{2h+1})^{2h+2}$ is isotopic to a right-handed Dehn twist about a simple closed curve that encloses all of the marked branch points; this is isotopic to the identity on the sphere by an isotopy that fixes $\ast$ while rotating a neighborhood of $\ast$ once in a left-handed direction.  

\subsection{The manifolds $H_h$}
Let $H_h$ denote the genus $h$ Lefschetz fibration with monodromy given by the hyperelliptic relation $(t_{c_1} t_{c_2} \cdots t_{c_{2h+1}} t_{c_{2h+1}} \cdots t_{c_2} t_{c_1})^2=1$ in $\mod(\Sigma_h)$. It is well-known that $H_h$ is diffeomorphic to the rational surface $\cp \# (4h+5)\cpb$, from which it easily follows that $H_h$ is simply connected with Euler characteristic $e(H_h)=4(h+2)$ and signature $\sigma(H_h)= -4(h+1).$

We denote the (untwisted) fiber sum of $r$ copies of $H_h$ as $H_h(r)$, and use a similar notation for fiber sums of $Z_h$. We next describe an equivalent factorization for the Lefschetz fibrations in the family $H_h(r)$. 

\begin{Lem}\label{lem:hyper_equiv}
For all $n \geq 1$, in $\mod(\Sigma_h)$ we have $$(t_{c_1} t_{c_2} \cdots t_{c_{2h+1}} t_{c_{2h+1}} \cdots t_{c_2} t_{c_1})^n = (t_{c_1} t_{c_2} \cdots t_{c_{2h+1}})^n (t_{c_{2h+1}} \cdots t_{c_2} t_{c_1})^n.$$
\end{Lem}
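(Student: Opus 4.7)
The plan is to reduce the identity to the single claim that $W := t_{c_1} t_{c_2} \cdots t_{c_{2h+1}}$ and $W' := t_{c_{2h+1}} \cdots t_{c_2} t_{c_1}$ commute in $\mod(\Sigma_h)$. Once $WW' = W'W$ is known, the desired equation $(WW')^n = W^n (W')^n$ follows by a routine induction on $n$: the base case $n=1$ is immediate, and the inductive step uses $[W,W']=1$ to move a single factor of $W$ past a string of $W'$'s when writing $(WW')^{n+1} = (WW')^n \cdot (WW') = W^n (W')^n W W'$.

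To establish commutativity, I would identify the product $WW' = t_{c_1} \cdots t_{c_{2h+1}} t_{c_{2h+1}} \cdots t_{c_1}$ with the hyperelliptic involution $\iota \in \mod(\Sigma_h)$. The hyperelliptic relation $(WW')^2 = 1$ tells us that $WW'$ has order at most two, and it is classical that the element represented by this specific word is precisely the order-two deck transformation of the double cover $\Sigma_h \to S^2$ branched over the $2h+2$ marked points suggested by Figure~\ref{fig:Sigma_h_c}. Since each generator $t_{c_i}$ lies in the hyperelliptic mapping class group $\mathcal{H}_h$ (the centralizer of $\iota$), so does $W$. Therefore $W$ commutes with $\iota = WW'$, giving $W(WW') = (WW')W$, i.e., $W^2 W' = WW'W$, and left-cancelling a single copy of $W$ yields $WW' = W'W$.

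The only real subtlety is the identification $WW' = \iota$ (as opposed to merely $(WW')^2 = 1$) together with the centrality of $\iota$ in $\mathcal{H}_h$; both are standard and can be invoked by reference. A slicker alternative, if desired, uses the classical odd chain identity $W^{h+1} = \iota$ in $\mod(\Sigma_h)$: combined with $WW' = \iota$, this gives $W' = W^h$, after which the lemma reduces to the trivial manipulation $(W^{h+1})^n = W^n (W^h)^n$. Either route suffices, and the content of the lemma is really just an algebraic consequence of well-known relations in the hyperelliptic mapping class group.
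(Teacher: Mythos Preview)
Your proof is correct and follows essentially the same approach as the paper: both hinge on the fact that every $t_{c_i}$ commutes with $WW' = t_{c_1}\cdots t_{c_{2h+1}}t_{c_{2h+1}}\cdots t_{c_1}$ (the hyperelliptic involution), from which the paper moves $W$ past $(WW')^{n-1}$ inside an induction while you first extract $[W,W']=1$ and then note that the identity becomes trivial. Your alternative observation that $W' = W^{h}$ in $\mod(\Sigma_h)$, reducing the lemma to an identity in a cyclic group, is a pleasant shortcut not mentioned in the paper.
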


\begin{proof}
Induction on $n$. The case where $n=1$ is a tautology. Assuming the statement for $n-1$, we then have
\begin{eqnarray*}
& & (t_{c_1} t_{c_2} \cdots t_{c_{2h+1}} t_{c_{2h+1}} \cdots t_{c_2} t_{c_1})^{n} \\
&=& (t_{c_1} t_{c_2} \cdots t_{c_{2h+1}} t_{c_{2h+1}} \cdots t_{c_2} t_{c_1})^{n-1} (\underline{t_{c_1} t_{c_2} \cdots t_{c_{2h+1}}} t_{c_{2h+1}} \cdots t_{c_2} t_{c_1}) \\
&=& (t_{c_1} t_{c_2} \cdots t_{c_{2h+1}})(t_{c_1} t_{c_2} \cdots t_{c_{2h+1}} t_{c_{2h+1}} \cdots t_{c_2} t_{c_1})^{n-1} (t_{c_{2h+1}} \cdots t_{c_2} t_{c_1}) \\
&=& (t_{c_1} t_{c_2} \cdots t_{c_{2h+1}}) (t_{c_1} t_{c_2} \cdots t_{c_{2h+1}})^{n-1} (t_{c_{2h+1}} \cdots t_{c_2} t_{c_1})^{n-1} (t_{c_{2h+1}} \cdots t_{c_2} t_{c_1}) \\
 &=& (t_{c_1} t_{c_2} \cdots t_{c_{2h+1}})^{n} (t_{c_{2h+1}} \cdots t_{c_2} t_{c_1})^{n}.\\
\end{eqnarray*}
The second equality follows from repeated use of the fact that $t_{c_i}$ commutes with  $$t_{c_1} t_{c_2} \cdots t_{c_{2h+1}} t_{c_{2h+1}} \cdots t_{c_2} t_{c_1}$$ for all $i$, which is used to move the underlined term to the left. The third equality is the inductive hypothesis.
\end{proof}

Lemma~\ref{lem:hyper_equiv} implies that $H_h(r)$ is equivalent to the Lefschetz fibration with factorization $$(t_{c_1} t_{c_2} \cdots t_{c_{2h+1}})^{2r} (t_{c_{2h+1}} \cdots t_{c_2} t_{c_1})^{2r}.$$ To depict $H_h(r)$ as a 2-fold branched cover of a Hirzebruch surface using this factorization, we must understand the projection of this factorization to an element of $\mod(\Sigma_{0,2h+2})$. As before, let $D_{2h+2}^2$ be a disk that contains all of the marked points, and let $\ast$ be another marked point in the complement of  $D_{2h+2}^2$. Using the Alexander method (see \cite{FarbMargalit}), the following is shown in \cite{alexander}.

\begin{Lem}\label{lem:isotopy} 
For all $1 \leq r \leq h$, $(\sigma_1 \cdots \sigma_{2h+1})^{2r} (\sigma_{2h+1} \cdots \sigma_{1})^{2r}$ is isotopic to the identity on $\Sigma_{0,2h+2}$ by an isotopy that moves $\ast$ along a path that passes once between the $(2r)$th and $(2r+1)$st marked points without twisting its neighborhood.
\end{Lem}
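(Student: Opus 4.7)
Proof plan: I would split the statement into two parts: (i) showing that $(\sigma_1\cdots\sigma_{2h+1})^{2r}(\sigma_{2h+1}\cdots\sigma_1)^{2r}$ represents the identity element in $\mod(\Sigma_{0,2h+2})$, and (ii) identifying the isotopy so that the trajectory of $\ast$ is the claimed path.

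For part (i), the plan is to lift through the hyperelliptic 2-fold branched cover $\Sigma_h \to \Sigma_{0,2h+2}$, under which each half-twist $\sigma_i$ lifts to the Dehn twist $t_{c_i}$. The lift of our word is $(t_{c_1}\cdots t_{c_{2h+1}})^{2r}(t_{c_{2h+1}}\cdots t_{c_1})^{2r}$, which by Lemma~\ref{lem:hyper_equiv} with $n=2r$ equals $(t_{c_1}\cdots t_{c_{2h+1}} t_{c_{2h+1}}\cdots t_{c_1})^{2r} = \bigl[(t_{c_1}\cdots t_{c_{2h+1}} t_{c_{2h+1}}\cdots t_{c_1})^{2}\bigr]^{r}$. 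The bracketed factor is the hyperelliptic relation, hence equals the identity in $\mod(\Sigma_h)$. Since the hyperelliptic involution lies in the kernel of the projection from the hyperelliptic subgroup of $\mod(\Sigma_h)$ to $\mod(\Sigma_{0,2h+2})$, this triviality descends to our word, proving (i).

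For part (ii), I would invoke the Birman exact sequence
$$1 \to \pi_1(\Sigma_{0,2h+2},\ast) \to \mod(\Sigma_{0,2h+3}) \to \mod(\Sigma_{0,2h+2}) \to 1,$$
treating $\ast$ as an additional marked point. The natural representative of our word, being a product of half-twists supported inside $D^2_{2h+2}$, fixes $\ast$; by (i) it projects to the identity downstairs, and so defines a point-pushing class in $\pi_1(\Sigma_{0,2h+2},\ast)$. I would then identify this class using the Alexander method: take the arcs $\alpha_1,\ldots,\alpha_{2h+1}$ between consecutive marked points of $\Sigma_{0,2h+2}$, augmented by a reference arc from $\ast$, and compare the action of the word on this filling system with the action of the point-push along the proposed path. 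The latter fixes each $\alpha_i$ for $i\neq 2r$ and preserves $\alpha_{2r}$ while placing $\ast$ on the opposite side. The verification is organized by induction on $r$, using
$$(\sigma_1\cdots\sigma_{2h+1})^{2(r+1)}(\sigma_{2h+1}\cdots\sigma_1)^{2(r+1)} = (\sigma_1\cdots\sigma_{2h+1})^{2}\cdot\bigl[(\sigma_1\cdots\sigma_{2h+1})^{2r}(\sigma_{2h+1}\cdots\sigma_1)^{2r}\bigr]\cdot(\sigma_{2h+1}\cdots\sigma_1)^{2}$$
together with the fact that conjugation by $(\sigma_1\cdots\sigma_{2h+1})^{\pm 2}$ cyclically shifts the marked points by two positions, converting a crossing across $\alpha_{2r}$ into a crossing across $\alpha_{2(r+1)}$.

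The main obstacle is part (ii): tracking the isotopy precisely enough to see both which arc $\ast$ crosses and that no spurious framing twist around $\ast$ is picked up. The Alexander method reduces the arc identification to a finite arc-by-arc comparison, but the inductive bookkeeping—especially the framing check, which confirms the point-push is \emph{not} composed with a Dehn twist about a small loop bounding a disk around $\ast$—requires care; the base case $r=1$ must be verified by a direct geometric computation before the induction can be run.
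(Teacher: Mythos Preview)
The paper does not supply its own proof of this lemma; it simply states that the result is established in \cite{alexander} using the Alexander method (citing \cite{FarbMargalit}). Your proposal is consistent with that indication: the substantive part~(ii) is handled exactly by the Alexander method on a filling arc system, together with the Birman exact sequence to interpret the resulting class as a point-push. So at the level of strategy you are aligned with what the paper points to.

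Two small remarks. First, your part~(i) detour through $\mod(\Sigma_h)$ is correct but unnecessary: the algebraic manipulation in Lemma~\ref{lem:hyper_equiv} uses only that each generator commutes with the word $t_{c_1}\cdots t_{c_{2h+1}}t_{c_{2h+1}}\cdots t_{c_1}$, and the same commutation holds for the $\sigma_i$; then the sphere relation $\sigma_1\cdots\sigma_{2h+1}\sigma_{2h+1}\cdots\sigma_1=1$ in $\mod(\Sigma_{0,2h+2})$ finishes (i) directly, without appealing to Birman--Hilden. Second, in your inductive step for~(ii) you should be explicit that the conjugating element $(\sigma_1\cdots\sigma_{2h+1})^{2}$ itself fixes $\ast$ and its framed neighborhood (it is supported in $D^2_{2h+2}$), so that the induction genuinely transports both the crossing arc and the ``no-twist'' conclusion from $r$ to $r+1$; otherwise the framing bookkeeping you flag as the main obstacle does not propagate. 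With those points clarified, your outline is a reasonable fleshing-out of the Alexander-method proof the paper cites.
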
 

As a result, we see that $H_h(r)$ is the 2-fold branched cover of $S^2 \times S^2$ branched over the ribbon surface shown in Figure~\ref{fig:H_h_r}. The boxed ribbon $E_{2r}$ is defined in the previous Figure~\ref{fig:Ebraid}.
\begin{figure}[ht]
\begin{center}
\includegraphics[scale=.9]{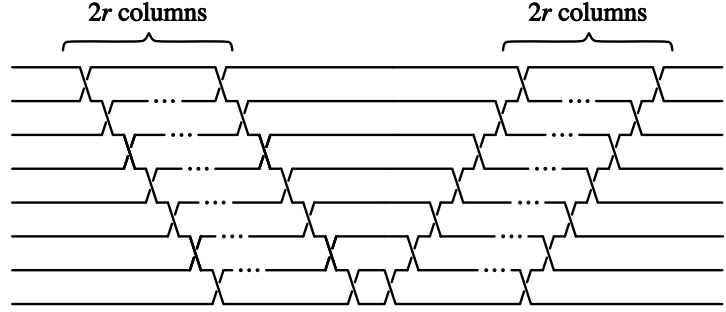}
\caption{The ribbon surface $E_{2r}$}
\label{fig:Ebraid}
\end{center}
\end{figure}
\begin{figure}[ht!]
\begin{center}
\includegraphics[scale=.9]{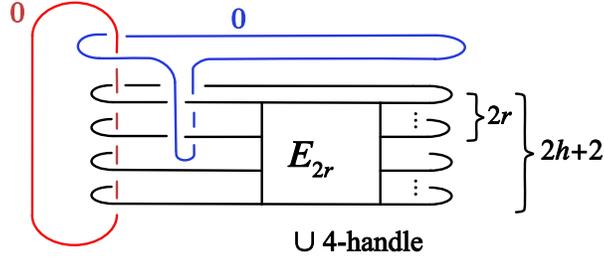}
\caption{The 2-fold branched cover is $H_h(r)$.}
\label{fig:H_h_r}
\end{center}
\end{figure}

\subsection{More Fiber Sums}

We can also consider fiber sums between the fibrations on $Z_h$ and $H_h$. The following proposition is due to Hisaaki Endo.

\begin{Prop}[\cite{Endo_chakiris}, Lemma 4.1] \label{prop:hyperelliptic_equiv}
The fiber sums $Z_h(2)$ and $H_h(h+1)$ are isomorphic as Lefschetz fibrations.
\end{Prop}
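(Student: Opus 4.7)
The approach is to establish a Hurwitz equivalence (up to simultaneous conjugation) between the monodromy factorizations of the two Lefschetz fibrations, which by standard theory suffices for isomorphism.

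First, I would apply Lemma~\ref{lem:hyper_equiv} with $n = 2(h+1)$ to rewrite the monodromy factorization of $H_h(h+1)$ as
\[
W_H \;:=\; (t_{c_1}t_{c_2}\cdots t_{c_{2h+1}})^{2h+2}\,(t_{c_{2h+1}}\cdots t_{c_2}t_{c_1})^{2h+2},
\]
while the monodromy factorization of $Z_h(2)$ is
\[
W_Z \;:=\; (t_{c_1}t_{c_2}\cdots t_{c_{2h+1}})^{2h+2}\,(t_{c_1}t_{c_2}\cdots t_{c_{2h+1}})^{2h+2}.
\]
Both products factor the identity in $\mod(\Sigma_h)$ (the forward block by the odd chain relation, and the reversed block similarly, using that $(t_{c_{2h+1}}\cdots t_{c_1})^{h+1}$ coincides with the hyperelliptic involution $\iota$ and $\iota^2 = 1$), and a direct count shows each chain twist $t_{c_i}$ appears exactly $4(h+1)$ times in either factorization, so the raw combinatorial data already matches.

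The task then reduces to transforming the ``reversed chain'' factor $(t_{c_{2h+1}}\cdots t_{c_1})^{2h+2}$ into the ``forward chain'' factor $(t_{c_1}\cdots t_{c_{2h+1}})^{2h+2}$ by Hurwitz moves, since such moves can be performed inside $W_H$ without disturbing the common forward prefix. I would attempt this by induction on $h$, using the chain braid relations $t_{c_i}t_{c_{i+1}}t_{c_i}=t_{c_{i+1}}t_{c_i}t_{c_{i+1}}$ for adjacent chain curves and the commutations $t_{c_i}t_{c_j}=t_{c_j}t_{c_i}$ for $|i-j|\geq 2$, reversing each period $(t_{c_{2h+1}}\cdots t_{c_1})$ of the second factor one at a time. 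The hyperelliptic involution $\iota = (t_{c_1}\cdots t_{c_{2h+1}})^{h+1}$, which centralizes every $t_{c_i}$ and satisfies $\iota^2 = 1$, should play a decisive role in absorbing the conjugations introduced by successive Hurwitz moves.

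The main obstacle is the combinatorial bookkeeping in the chain-reversal: each elementary Hurwitz move conjugates a chain twist by its neighbor, producing Dehn twists about non-chain curves, and verifying that these conjugates eventually reassemble into the standard chain is delicate. A cleaner alternative---which I expect Endo's Lemma~4.1 in \cite{Endo_chakiris} to follow---is to invoke a Chakiris-type uniqueness result asserting that any two hyperelliptic genus-$h$ Lefschetz fibrations over $S^2$ whose vanishing cycles all lie in the standard chain $\{c_1,\ldots,c_{2h+1}\}$ and occur with equal multiplicities are Hurwitz-equivalent. Applied to $W_H$ and $W_Z$, whose multiplicity vectors coincide, this immediately yields the desired isomorphism.
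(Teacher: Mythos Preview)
The paper does not supply its own proof of this proposition: it is stated with attribution to Endo \cite{Endo_chakiris}, Lemma~4.1, and no argument is given. So there is no in-paper proof to compare against; I can only assess your plan on its merits.

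Your overall strategy is correct and is essentially how such a result is established. Using Lemma~\ref{lem:hyper_equiv} to rewrite $W_H$ is legitimate as a Hurwitz equivalence (each commutation of a $t_{c_i}$ past the hyperelliptic word is realized by a cyclic sequence of elementary Hurwitz moves, since the conjugate at the end equals $t_{c_i}$ again). The reduction to showing that the reversed block $(t_{c_{2h+1}}\cdots t_{c_1})^{2h+2}$ is Hurwitz equivalent to the forward block $(t_{c_1}\cdots t_{c_{2h+1}})^{2h+2}$ is exactly the right target.

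Two comments sharpen the plan. First, you are closer to done than you indicate: the chain reversal you seek is precisely the content of the Reversing Lemma (Lemma~\ref{lem:reversing}) with $m=2h+1$, already stated in the paper. That lemma asserts equality of positive braid words in the standard generators; by Garside's solution to the word problem in the positive braid monoid, any such equality is realized by a finite sequence of the positive relations $\sigma_i\sigma_j=\sigma_j\sigma_i$ ($|i-j|\ge 2$) and $\sigma_i\sigma_{i+1}\sigma_i=\sigma_{i+1}\sigma_i\sigma_{i+1}$. Each of these relations, read as a move on tuples of Dehn twists $t_{c_i}$, \emph{is} a Hurwitz move (the commutation is a single elementary move; the braid relation is two moves, since $t_{c_i}t_{c_{i+1}}t_{c_i}^{-1}=t_{c_{i+1}}^{-1}t_{c_i}t_{c_{i+1}}$). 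So the Reversing Lemma already furnishes the Hurwitz equivalence you need, and the ``delicate bookkeeping'' you worry about is subsumed in Garside's theorem. Second, your fallback ``Chakiris-type uniqueness result'' is stated more broadly than is safe to invoke without proof; the version that actually holds and suffices here is exactly the Garside statement above (transitivity of positive relations on positive factorizations of a fixed braid into standard generators), not a general statement about hyperelliptic Lefschetz fibrations with prescribed multiplicities.
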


As a result, fiber sums between any number of copies of $Z_h$ and $H_h$ are isomorphic to either $H_h(m)$ or $Z_h(1) \#_f H_h(m-1)$, for some $m$. In both cases, the result is spin if and only if $h$ is odd and $m$ is even \cite{EndoNagami}.

\section{The manifolds $X_{g,h}'[i]$} \label{sec:manifolds}

We next derive as Theorem~\ref{thm:relation} below a relation in the mapping class group $\mod(\Sigma_g^{2})$ which will be used to construct symplectic Lefschetz pencils. This relation, and the corresponding pencils, are a direct generalization of the relations derived in \cite{BHM:Unchaining}. Instead of adapting the algebraic derivation there, we will proceed by finding a relation in the braid group $\mod(\Sigma_{0,2g+2}^1)$, which is lifted to a relation in $\mod(\Sigma_g^{2})$ using a 2-fold branched cover.  We begin with a Lemma due to Kenneth Chakiris (\cite{Chakiris}, Lemma 3.5.)

\begin{Lem}[The Reversing Lemma]\label{lem:reversing}
Let $\sigma_1, \ldots, \sigma_{2g+1}$ be the standard generators of $\mod(\Sigma_{0,2g+2}^1)$. For any $2 \leq m \leq 2g+1$, we have
$(\sigma_1 \cdots \sigma_{m})^{m+1} = (\sigma_m \cdots \sigma_{1})^{m+1}.$
\end{Lem}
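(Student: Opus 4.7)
The plan is to interpret both sides as the square of the Garside half-twist of the appropriate sub-braid group, and then invoke two classical identities. The generators $\sigma_1, \ldots, \sigma_m$ constitute the standard generators of a sub-braid group of $\mathrm{Mod}(\Sigma_{0,2g+2}^1)$ isomorphic to the braid group $B_{m+1}$, corresponding to a subdisk $D \subset \Sigma_{0,2g+2}^1$ that contains the first $m+1$ marked points and is disjoint from the remaining $2g+1-m$ marked points. Inside this subgroup, let $\Delta$ denote the Garside half-twist, that is, the half-rotation of $D$ about its center.

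I would then invoke the two standard Garside identities in $B_{m+1}$: first, $(\sigma_1 \sigma_2 \cdots \sigma_m)^{m+1} = \Delta^2$, the familiar expression for the generator of the center; and second, $\Delta \sigma_i \Delta^{-1} = \sigma_{m+1-i}$ for every $1 \leq i \leq m$, which says that the half-twist reverses the order of the strands.

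From the second identity it follows that $\Delta (\sigma_1 \sigma_2 \cdots \sigma_m) \Delta^{-1} = \sigma_m \sigma_{m-1} \cdots \sigma_1$, and raising to the $(m+1)$th power gives
$$\Delta (\sigma_1 \cdots \sigma_m)^{m+1} \Delta^{-1} = (\sigma_m \cdots \sigma_1)^{m+1}.$$
But by the first identity the left-hand side equals $\Delta \cdot \Delta^2 \cdot \Delta^{-1} = \Delta^2 = (\sigma_1 \cdots \sigma_m)^{m+1}$, which completes the argument.

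The only real obstacle is justifying the two Garside identities; both are classical in braid group theory, and in a self-contained write-up one could derive the reversal formula from the braid relations by induction on $m$, while the center formula follows from the Garside recursion $\Delta_n = (\sigma_1 \sigma_2 \cdots \sigma_{n-1}) \Delta_{n-1}$. An alternative approach would be a direct induction on $m$ using only the braid relation $\sigma_i \sigma_{i+1} \sigma_i = \sigma_{i+1} \sigma_i \sigma_{i+1}$, starting from the base case $m=2$ where $(\sigma_1 \sigma_2)^3 = (\sigma_1 \sigma_2 \sigma_1)(\sigma_2 \sigma_1 \sigma_2) = (\sigma_2 \sigma_1 \sigma_2)(\sigma_1 \sigma_2 \sigma_1) = (\sigma_2 \sigma_1)^3$, but the Garside approach has the advantage of making the underlying symmetry transparent.
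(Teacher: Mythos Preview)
Your argument is correct. The two Garside identities you invoke are classical facts about the braid group $B_{m+1}$ (see for instance Kassel--Turaev or Birman), and your deduction from them is clean: conjugation by $\Delta$ reverses the word $\sigma_1\cdots\sigma_m$, while $(\sigma_1\cdots\sigma_m)^{m+1}=\Delta^2$ is central, so the two $(m+1)$st powers agree.

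As for comparison with the paper: there is nothing to compare. The paper does not prove this lemma at all; it simply attributes the statement to Chakiris (\cite{Chakiris}, Lemma~3.5) and moves on. Your Garside argument is exactly the kind of proof one would expect to find behind such a citation, and it has the virtue of explaining \emph{why} the identity holds---both sides equal the full twist $\Delta^2$ on the first $m+1$ strands. The alternative direct induction you sketch in your final paragraph would also work, but as you note it obscures this symmetry.
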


Let $1 \leq h < g$. For all $1 \leq j \leq 2g-2h$, let $d_j$ and $e_j$ denote the curves on $\Sigma_g^2$ shown in Figure~\ref{fig:Sigma_g}. We abbreviate
 $D_{2h+2}=t_{d_{1}} t_{d_{2}} \cdots t_{d_{2g-2h}}$ and $E_{2h+2}=t_{e_{2g-2h}} \cdots t_{e_{2}} t_{e_{1}}.$ 
\begin{figure}[ht!]
\begin{center}
\includegraphics[scale=1]{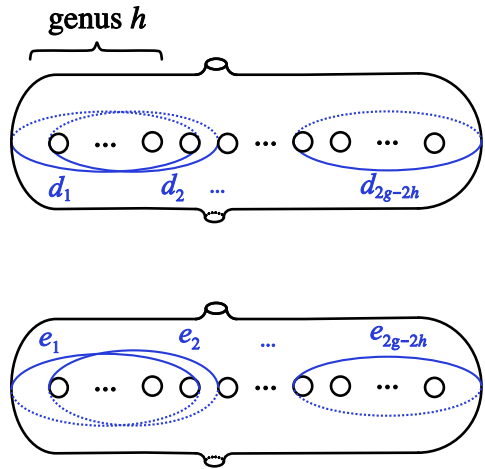}
\caption{The surface $\Sigma_g^2$}
\label{fig:Sigma_g}
\end{center}
\end{figure}

\begin{Thm}\label{thm:relation} 
Let $1 \leq h < g$, and write $2g+2$ as $2g+2=p(2h+2)+2r$, with $p>0$ and $0 \leq r <h+1$. Then in $\mod(\Sigma_{g}^2)$ we have
\begin{equation*} t_{\delta} t_{\delta'} = D_{2h+2} E_{2h+2}  (t_{c_1} \cdots t_{c_{2h+1}})^{(2h+2)(2p-1)} (t_{c_{2h+3}}  \cdots t_{c_{2g+1}})^{2g-2h} (t_{c_1} \cdots t_{c_{2h+1}})^{2r} (t_{c_{2h+1}}  \cdots t_{c_{1}})^{2r}, \end{equation*}
where $\delta$ and $\delta'$ are curves parallel to the boundary components of $\Sigma_{g}^2$.
\end{Thm}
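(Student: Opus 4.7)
The plan is to follow the strategy flagged before the theorem: first prove a matching identity in the braid group $\mod(\Sigma_{0,2g+2}^1)$, then lift it along the hyperelliptic 2-fold branched cover $\pi\colon \Sigma_g^2 \to \Sigma_{0,2g+2}^1$ ramified over the $2g+2$ marked points. Under $\pi$, each Dehn twist $t_{c_i}$ is the canonical lift of the standard half-twist $\sigma_i$, and each $t_{d_j}$ and $t_{e_j}$ lifts a half-twist $\bar\sigma_{d_j}$, $\bar\sigma_{e_j}$ along an arc enclosing the corresponding block of branch points. Because the monodromy of $\pi$ around $\partial \Sigma_{0,2g+2}^1$ encircles all $2g+2$ branch points, it is trivial in $\mathbb{Z}/2$, so the preimage of $\partial \Sigma_{0,2g+2}^1$ is two disjoint circles, namely $\delta$ and $\delta'$, and the braid-group boundary twist $t_\partial$ lifts to $t_\delta t_{\delta'}$. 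It therefore suffices to establish the braid-group identity obtained by replacing each Dehn twist in the statement with its corresponding half-twist and setting the product equal to $t_\partial$.

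The starting point for the braid identity is the classical chain relation $(\sigma_1\sigma_2\cdots\sigma_{2g+1})^{2g+2} = t_\partial$ in $\mod(\Sigma_{0,2g+2}^1)$. Guided by the decomposition $2g+2 = p(2h+2) + 2r$, I would first isolate the trailing block that will become the hyperelliptic tail $(\sigma_1\cdots\sigma_{2h+1})^{2r}(\sigma_{2h+1}\cdots\sigma_{1})^{2r}$, using the Reversing Lemma \ref{lem:reversing} together with the disjointness commutations $[\sigma_i,\sigma_j]=1$ for $|i-j|\ge 2$ to shuttle the generators $\sigma_{2h+2},\ldots,\sigma_{2g+1}$ past the subchain in $\sigma_1,\ldots,\sigma_{2h+1}$ (compare Lemma \ref{lem:isotopy}, which captures the corresponding geometric isotopy). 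The remaining $p(2h+2)$ copies of the full chain must then be rewritten iteratively: at each stage one extracts one copy of a short chain $(\sigma_1\cdots\sigma_{2h+1})^{2h+2}$ on the first $2h+2$ strands, absorbs the crossing generator $\sigma_{2h+2}$ into one factor of $\bar D_{2h+2}$ and one of $\bar E_{2h+2}$, and accumulates the residual generators on the right-hand strands into a power of $(\sigma_{2h+3}\cdots\sigma_{2g+1})$. After $2p-1$ such iterations, the short-chain exponent $(2h+2)(2p-1)$ and long-chain exponent $2g-2h$ appear exactly as in the statement.

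The main obstacle is this iterative rewriting stage: the bookkeeping required to guarantee that each iteration contributes exactly one half-twist to each of $\bar D_{2h+2}$ and $\bar E_{2h+2}$, and to verify that no extra twists are introduced by the orientation swaps from the Reversing Lemma, must be done in a consistent order. Once the braid identity is established, the passage to $\mod(\Sigma_g^2)$ is formal: every factor in the identity has a canonical Dehn-twist lift, the lifts multiply compatibly, and the boundary twist $t_\partial$ lifts to $t_\delta t_{\delta'}$, yielding the relation as stated.
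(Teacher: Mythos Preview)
Your overall architecture---prove a braid identity in $\mod(\Sigma_{0,2g+2}^1)$ and lift it through the hyperelliptic double cover so that $t_\partial$ becomes $t_\delta t_{\delta'}$---is exactly the paper's strategy, and your discussion of the lift is correct. The divergence is entirely in how the braid identity itself is established.

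The paper does \emph{not} start from the full chain relation $(\sigma_1\cdots\sigma_{2g+1})^{2g+2}=t_\partial$ and rewrite it. Instead it defines explicit braids $T_{2h+2}=\tau_1\cdots\tau_{2g-2h}$ and $U_{2h+2}=\upsilon_{2g-2h}\cdots\upsilon_1$ (the downstairs shadows of $D_{2h+2}$ and $E_{2h+2}$) and argues \emph{geometrically}: $T_{2h+2}U_{2h+2}$ is visibly a full right twist between the block of the top $2h+2$ strands and the block of the bottom $2g-2h$ strands, with residual internal braiding $(\bar\sigma_1\cdots\bar\sigma_{2h+1})^{2g-2h}(\bar\sigma_{2h+1}\cdots\bar\sigma_1)^{2g-2h}$ on the top block. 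One then appends exactly the factors needed to cancel that internal braiding and to insert a full twist inside each block, namely $(\sigma_1\cdots\sigma_{2h+1})^{2g-2h}(\sigma_{2h+1}\cdots\sigma_1)^{2g+2}(\sigma_{2h+3}\cdots\sigma_{2g+1})^{2g-2h}$, and the whole product is then manifestly the full twist $t_\partial$. Only after this is established does the Reversing Lemma enter, and only to massage the exponents $2g-2h$ and $2g+2$ into the form $(2h+2)(2p-1)$ and $2r$ appearing in the statement.

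Your proposed route---an iterative algebraic rewriting of $(\sigma_1\cdots\sigma_{2g+1})^{2g+2}$---is where the gap lies. As you describe it, each pass ``absorbs the crossing generator $\sigma_{2h+2}$ into one factor of $\bar D_{2h+2}$ and one of $\bar E_{2h+2}$'' and you run $2p-1$ passes. But $D_{2h+2}$ and $E_{2h+2}$ each contain $2g-2h=(p-1)(2h+2)+2r$ half-twist factors, not $2p-1$, so the iteration count cannot be right; moreover the arcs underlying $\tau_j$ and $\upsilon_j$ each cross $2h+2$ strands, so a single $\sigma_{2h+2}$ per pass cannot build them. You correctly flag this bookkeeping as ``the main obstacle,'' and indeed it is: the scheme as written does not assemble $T_{2h+2}U_{2h+2}$. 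The paper's geometric shortcut sidesteps this entirely---rather than deriving $T_{2h+2}U_{2h+2}$ from chain words, it reads off directly what that braid does to the two blocks of strands and then completes it to a full twist.
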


\begin{Rem} When $r=0$, the terms $(t_{c_1} \cdots t_{c_{2h+1}})^{2r} (t_{c_{2h+1}}  \cdots t_{c_{1}})^{2r}$ are omitted from the factorization.
\end{Rem}

 \begin{proof}
We give a proof using braids, in the spirit of Lemma 8 of \cite{Auroux_stable}, by identifying the braid group $B_{2g+2}$ with the mapping class group $\mod(\Sigma^1_{0,2g+2})$ of the disk with marked points. Define $\tau_{i}$ and $\upsilon_{i}$ to be the braids shown in Figure~\ref{fig:qp_gens}.
\begin{figure}[ht!]
\begin{center}
\includegraphics[scale=.9]{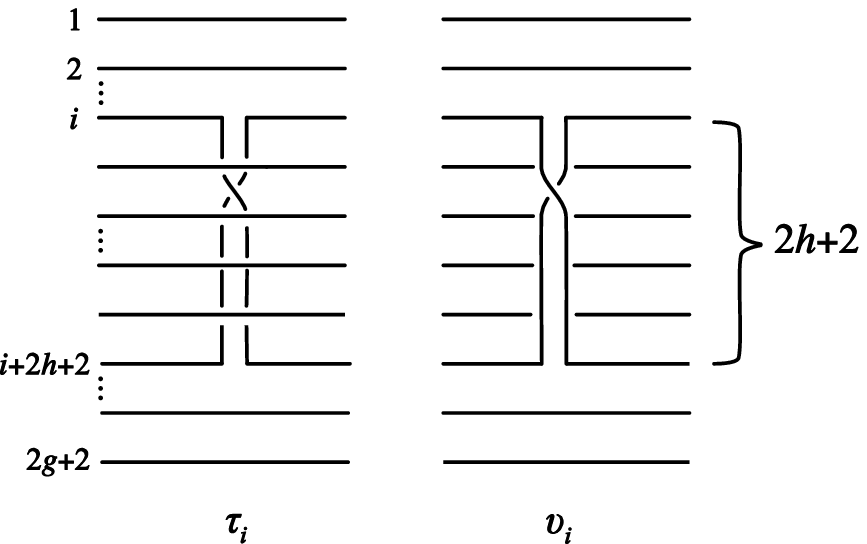}
\caption{The braids $\tau_{i}$ and $\upsilon_{i}$}
\label{fig:qp_gens}
\end{center}
\end{figure}

 \begin{figure}[ht!]
\begin{center}
\includegraphics[scale=.9]{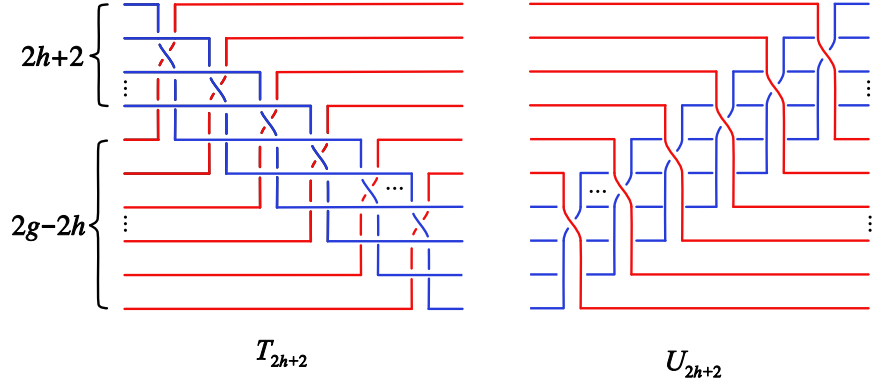}
\caption{The braids $T_{2h+2}$ and $U_{2h+2}$ }
\label{fig:TUbraid}
\end{center}
\end{figure}

Consider $T_{2h+2}=\tau_{1} \tau_{2} \cdots \tau_{2g-2h}$ and $U_{2h+2}=\upsilon_{2g-2h} \cdots \upsilon_{2} \upsilon_{1},$ shown in Figure~\ref{fig:TUbraid}. It is easy to see that $T_{2h+2}$ passes the upper $2h+2$ strands over the lower $2g-2h$ strands, forming a right half twist between the two collections. (This is highlighted by the colors in Figure~\ref{fig:TUbraid}.) The upper strands, regarded by themselves, form the braid $(\osigma_1 \cdots \osigma_{2h+1})^{2g-2h}$, while the lower strands are parallel. Similarly, for $U_{2h+2}$, the upper $2g-2h$ strands pass over the lower $2h+2$ strands, forming a right half twist between the two collections. The lower stands, regarded by themselves, form the braid $(\osigma_{2h+1} \cdots \osigma_{1})^{2g-2h}$, with the upper strands parallel. 

Adding additional generators to the product $T_{2h+2}U_{2h+2}$, we claim that 
$$T_{2h+2} U_{2h+2} (\sigma_1 \cdots \sigma_{2h+1})^{2g-2h} (\sigma_{2h+1} \cdots \sigma_{1})^{2g+2}  (\sigma_{2h+3} \cdots \sigma_{2g+1})^{2g-2h}$$
forms a full right twist among all strands.

This can be seen by expanding this expression as:
\begin{equation*}
T_{2h+2} U_{2h+2} (\sigma_1 \cdots \sigma_{2h+1})^{2g-2h} (\sigma_{2h+1} \cdots \sigma_{1})^{2g-2h} (\sigma_{2h+1} \cdots \sigma_{1})^{2h+2} (\sigma_{2h+3} \cdots \sigma_{2g+1})^{2g-2h}.
\end{equation*}
The additional terms $$(\sigma_1 \cdots \sigma_{2h+1})^{2g-2h} (\sigma_{2h+1} \cdots \sigma_{1})^{2g-2h}$$ following $T_{2h+2}U_{2h+2}$ undo the braiding among the upper $2h+2$ strands, and the subsequent terms $$(\sigma_{2h+1} \cdots \sigma_{1})^{2h+2} (\sigma_{2h+3} \cdots \sigma_{2g+1})^{2g-2h}$$ separately introduce a full right twist within both the upper $2h+2$ and lower $2g-2h$ strands. This results in a full right twist among {\em all} strands, as claimed.

The assumed condition $2g+2=p(2h+2)+2r$ implies $2g-2h=(p-1)(2h+2)+2r$. Substituting both and using the Reversing Lemma we get
\begin{eqnarray*}
&\mbox{}& T_{2h+2} U_{2h+2} (\sigma_1 \cdots \sigma_{2h+1})^{2g-2h} (\sigma_{2h+1} \cdots \sigma_{1})^{2g+2}  (\sigma_{2h+3} \cdots \sigma_{2g+1})^{2g-2h} \\
&=& T_{2h+2} U_{2h+2} (\sigma_1 \cdots \sigma_{2h+1})^{(2h+2)(p-1)+2r} (\sigma_{2h+1} \cdots \sigma_{1})^{(2h+2)p+2r}  (\sigma_{2h+3} \cdots \sigma_{2g+1})^{2g-2h} \\
&=& T_{2h+2} U_{2h+2} (\sigma_1 \cdots \sigma_{2h+1})^{(2h+2)(2p-1)} (\sigma_1 \cdots \sigma_{2h+1})^{2r}(\sigma_{2h+1} \cdots \sigma_{1})^{2r} (\sigma_{2h+3} \cdots \sigma_{2g+1})^{2g-2h}. \\
\end{eqnarray*}
(Note that if $r=0$, the terms with exponent $2r$ become unnecessary.)

Since this braid is a full right twist, interpreting the braid as an element of the mapping class group $\mod(\Sigma^1_{0,2g+2})$ of the disk with $2g+2$ marked points, this expression is in the same class as a Dehn twist about a curve parallel to the boundary (\cite{FarbMargalit}). The result follows by lifting this relation under the 2-fold branched cover $\Sigma^2_{g} \to \Sigma^1_{0,2g+2}$ with $2g+2$ branch points; the lifts of $T_{2h+2}$ and $U_{2h+2}$ are $D_{2h+2}$ and $E_{2h+2}$, respectively, and each $\sigma_i$ lifts to $t_{c_i}$.
\end{proof}

Starting from the factorization in Theorem~\ref{thm:relation}, for any $0 \leq i \leq 2p-1$ we can perform $i$ unchaining surgeries to the subword $ (t_{c_1} t_{c_2} \cdots t_{c_{2h+1}})^{2h+2}$, and another one to the subword $(t_{c_{2h+3}}  \cdots t_{c_{2g+1}})^{2g-2h}$. This yields the relation 
$$ D_{2h+2} E_{2h+2} t_a^i  t_{a'}^i t_b t_{b'} (t_{c_1} \cdots t_{c_{2h+1}})^{(2h+2)(2p-1-i)} (t_{c_1} \cdots t_{c_{2h+1}})^{2r} (t_{c_{2h+1}}  \cdots t_{c_{1}})^{2r} =t_{\delta} t_{\delta'} $$ 
in $\mod(\Sigma_{g}^2)$, where $a, a', b, b'$ are shown in Figure~\ref{fig:Sigma_g_ab}.

\begin{figure}[ht]
\begin{center}
\includegraphics[scale=1]{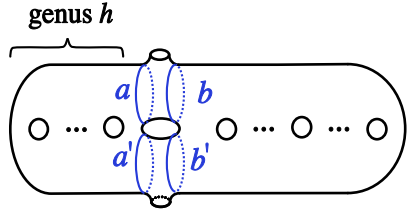}
\caption{The curves $a, a', b, b'$ on $\Sigma_g^2$}
\label{fig:Sigma_g_ab}
\end{center}
\end{figure}

The clever inductive argument using the lantern relation found in the proof of Theorem 4.6 of \cite{BHM:Unchaining} can now be applied verbatim to lift this relation to $\mod(\Sigma_g^{2(i+1)})$ as 
\begin{multline}
\Delta = D_{2h+2} E_{2h+2}  (t_{x_{i+1}} \cdots t_{x_{2}} t_{x_{1}}) (t_{x_{i+1}^\prime} \cdots t_{x_{2}^\prime}t_{x_{1}^\prime}) (t_{c_1} t_{c_2} \cdots t_{c_{2h+1}})^{(2h+2)(2p-i-1)} \\ (t_{c_1} t_{c_2} \cdots t_{c_{2h+1}})^{2r}(t_{c_{2h+1}} \cdots t_{c_2} t_{c_1})^{2r},
\label{eqn:monodromy}
\end{multline}
where $\Delta$ is the product of right-handed Dehn twists about a curve parallel to each boundary component of $\Sigma_g^{2(i+1)}$. The curves $x_k$ and $x'_k$ are shown in Figure~\ref{fig:Sigma_g_x}.

\begin{figure}[ht]
\begin{center}
\includegraphics[scale=1]{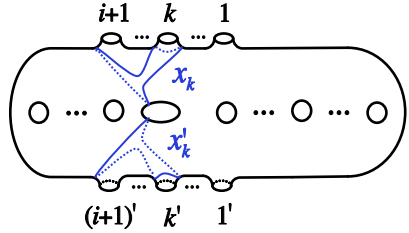}
\caption{The curves $x_k, x_k'$ on $\Sigma_g^{2(i+1)}$}
\label{fig:Sigma_g_x}
\end{center}
\end{figure}

This factorization gives the following theorem.

\begin{Thm} \label{thm:manifolds}
Let $1 \leq h <g$, and write $2g+2$ as $2g+2=p(2h+2)+2r$, with $p>0$ and $0 \leq r <g+1$. Then
for all $0 \leq i \leq 2p-1$, there is a symplectic 4-manifold $X'_{g,h}[i]$ which admits a genus $g$ Lefschetz pencil with monodromy factorization in $\mod(\Sigma_g^{2(i+1)})$ given by Equation \ref{eqn:monodromy}.
\end{Thm}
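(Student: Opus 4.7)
The plan is to verify that Equation~\ref{eqn:monodromy} is a valid positive factorization of $\Delta$ in $\mod(\Sigma_g^{2(i+1)})$, and then to invoke Gompf's theorem (in the form for factorizations of products of boundary-parallel twists) to conclude that such a factorization gives rise to a genus $g$ symplectic Lefschetz pencil on a 4-manifold, which I would name $X'_{g,h}[i]$. The construction of the factorization itself proceeds in two stages, beginning from Theorem~\ref{thm:relation}.

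First, I would perform the unchaining surgeries described in the paragraph preceding the theorem. Writing the subword $(t_{c_1} \cdots t_{c_{2h+1}})^{(2h+2)(2p-1)}$ as $[(t_{c_1} \cdots t_{c_{2h+1}})^{2h+2}]^{2p-1}$ and noting that the allowed range $0 \leq i \leq 2p-1$ exactly matches the number of available copies, I would apply the unchaining substitution $i$ times to trade $i$ of these chain powers for $t_a^i t_{a'}^i$, with $a, a'$ the curves of Figure~\ref{fig:Sigma_g_ab}. One additional unchaining surgery applied to $(t_{c_{2h+3}} \cdots t_{c_{2g+1}})^{2g-2h}$ introduces the pair $t_b t_{b'}$. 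This yields precisely the intermediate relation in $\mod(\Sigma_g^2)$ displayed immediately before the theorem.

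Next, I would apply the inductive lantern-relation lifting argument from the proof of Theorem 4.6 of \cite{BHM:Unchaining} verbatim. That argument uses repeated applications of the lantern relation to exchange the subword $t_a^i t_{a'}^i t_b t_{b'}$ together with the pair of boundary twists $\Delta = t_\delta t_{\delta'}$ on $\Sigma_g^2$ for the product $(t_{x_{i+1}} \cdots t_{x_2} t_{x_1})(t_{x_{i+1}^\prime} \cdots t_{x_2^\prime} t_{x_1^\prime})$ in $\mod(\Sigma_g^{2(i+1)})$, with $\Delta$ now equal to the product of right-handed twists about curves parallel to each of the $2(i+1)$ boundary components. The remaining factors in Equation~\ref{eqn:monodromy}, namely $D_{2h+2}$, $E_{2h+2}$, the surviving chain power $(t_{c_1} \cdots t_{c_{2h+1}})^{(2h+2)(2p-i-1)}$, and the residual reversing term $(t_{c_1} \cdots t_{c_{2h+1}})^{2r}(t_{c_{2h+1}} \cdots t_{c_1})^{2r}$, are all supported in a subsurface disjoint from the new boundary components and so lift unchanged.

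The main obstacle I anticipate is the bookkeeping in the lantern-lifting step: one must check that the inductive construction of the curves $x_k, x_k^\prime$ from \cite{BHM:Unchaining} goes through in the presence of the additional residual reversing term $(t_{c_1} \cdots t_{c_{2h+1}})^{2r}(t_{c_{2h+1}} \cdots t_{c_1})^{2r}$, which is absent in the $h = 1$ setting treated there. Because this subword is geometrically supported away from the portion of the surface modified by the lantern relations, I expect it to commute past the lifting without complication, but this compatibility should be verified explicitly before concluding with Gompf's theorem that Equation~\ref{eqn:monodromy} determines a symplectic genus $g$ Lefschetz pencil with $2(i+1)$ base points on a 4-manifold $X'_{g,h}[i]$.
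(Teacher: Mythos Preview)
Your proposal is correct and follows essentially the same route as the paper: the paper's argument for Theorem~\ref{thm:manifolds} is precisely the text immediately preceding it, namely (i) start from Theorem~\ref{thm:relation}, (ii) perform $i$ unchaining surgeries on $(t_{c_1}\cdots t_{c_{2h+1}})^{2h+2}$ and one on $(t_{c_{2h+3}}\cdots t_{c_{2g+1}})^{2g-2h}$, and (iii) invoke the lantern-lifting argument of \cite{BHM:Unchaining}, Theorem~4.6, verbatim. One small correction to your ``main obstacle'': the residual term $(t_{c_1}\cdots t_{c_{2h+1}})^{2r}(t_{c_{2h+1}}\cdots t_{c_1})^{2r}$ is \emph{not} absent in the $h=1$ setting---it appears there when $g$ is even (this is why \cite{BHM:Unchaining} treats $g$ even and odd separately, as noted in the Remark following Theorem~\ref{thm:manifolds})---so the compatibility you worry about is already handled in the original BHM argument and genuinely carries over without change.
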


\begin{Rem}
Setting $h=1$ in Equation~\ref{eqn:monodromy} reproduces Baykur-Hayano-Monden's relations in Theorem 4.6 of \cite{BHM:Unchaining} and consequently their manifolds correspond to our manifolds $X_{g,1}'[i]$. In their description, separate factorizations are given for $g$ even and odd. Thus Theorem~\ref{thm:relation} simultaneously consolidates their factorization into a single statement for both cases while also generalizing it to hold for all $h$.
\end{Rem}

\begin{Prop} \label{prop:properties}
The manifolds $X'_{g,h}[i]$ have Euler characteristic $$e(X'_{g,h}[i])=4-4h+2(2h+1)(2g+2)-(i+1)(2h+1)(2h+2)$$
and signature $$\sigma(X'_{g,h}[i])=-(2h+2)(2g+2)+2(i+1)(h+1)^2.$$
\end{Prop}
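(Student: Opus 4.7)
The plan is to extract both invariants from the monodromy factorization~(\ref{eqn:monodromy}), whose structure (via Lemma~\ref{lem:hyper_equiv}) already exhibits the Lefschetz pencil on $X'_{g,h}[i]$ as a fiber-sum assembly of $Z_h$- and $H_h$-building blocks together with a small connective piece.

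For the Euler characteristic I would count the total number $n$ of Dehn-twist factors on the right-hand side of~(\ref{eqn:monodromy}): $2(2g-2h)$ from the pair $D_{2h+2}E_{2h+2}$, $2(i+1)$ from the lantern words $t_{x_{i+1}}\cdots t_{x_1}$ and $t_{x'_{i+1}}\cdots t_{x'_1}$, $(2h+1)(2h+2)(2p-i-1)$ from the main chain power, and $2(2h+1)(2r)$ from the final two blocks. Substituting $2g+2=p(2h+2)+2r$ yields $(2h+2)(2p-i-1)+4r=2(2g+2)-(2h+2)(i+1)$, collapsing the total to
\[
n = 4g-4h+2(i+1)+(2h+1)\bigl[\,2(2g+2)-(2h+2)(i+1)\,\bigr].
\]
Since $X'_{g,h}[i]$ is a genus-$g$ Lefschetz pencil with $2(i+1)$ base points, blowing those points up produces a Lefschetz fibration on $X'_{g,h}[i]\#2(i+1)\cpb$ with $n$ singular fibers, so $e(X'_{g,h}[i])=4-4g+n-2(i+1)$; substituting the collapsed $n$ reduces this to the stated formula.

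For the signature I would use Lemma~\ref{lem:hyper_equiv} to recognize the subwords $(t_{c_1}\cdots t_{c_{2h+1}})^{(2h+2)(2p-i-1)}$ and $(t_{c_1}\cdots t_{c_{2h+1}})^{2r}(t_{c_{2h+1}}\cdots t_{c_1})^{2r}$ in~(\ref{eqn:monodromy}) as precisely the monodromies of $(2p-i-1)$ stacked copies of $Z_h$ and $r$ stacked copies of $H_h$ respectively. Combined with the known values $\sigma(Z_h)=-2(h+1)^2$ and $\sigma(H_h)=-4(h+1)$ and fiber-sum signature additivity, these subwords contribute
\[
(2p-i-1)\sigma(Z_h)+r\sigma(H_h) \;=\; -(2h+2)(2g+2)+2(i+1)(h+1)^2,
\]
where the last equality comes from collecting terms with $2p(h+1)+2r=2g+2$. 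It then remains to verify that the connective factor $D_{2h+2}E_{2h+2}$ together with the lantern pairs $t_{x_k}t_{x'_k}$ contribute $0$ to $\sigma(X'_{g,h}[i])$ (equivalently, contribute only the blow-up correction $-2(i+1)$ to $\sigma$ of the blown-up Lefschetz fibration). I would check this either via the 2-fold branched cover description from the proof of Theorem~\ref{thm:relation} — computing $[B]\cdot[B]$ for the branch surface coming from the connective braids $T_{2h+2}U_{2h+2}$ and the lantern substitutions — or, alternatively, by anticipating the main theorem of Section~\ref{sec:theorem} which identifies $X'_{g,h}[i]$ diffeomorphically with a fiber sum $Z_h(2p-i-1)\#_f H_h(r)$, at which point signature additivity is immediate.

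The main obstacle is the signature side: because the lantern-introduced curves $x_k,x'_k$ need not be individually invariant under the hyperelliptic involution of $\Sigma_g$, the blown-up fibration is not strictly hyperelliptic in Endo's sense, so his signature formula cannot be applied uniformly to all $n$ vanishing cycles. One therefore has to handle the ``connective'' contribution by hand — either through the branched-cover bookkeeping from Theorem~\ref{thm:relation}, or via the fiber-sum decomposition established by the main theorem. Once this step is carried out, the arithmetic simplifications using $2g+2=p(2h+2)+2r$ collapse the remaining expressions to the stated $e$ and $\sigma$.
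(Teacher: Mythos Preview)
Your Euler-characteristic computation is correct and is exactly what the paper does: count the Dehn-twist factors in~(\ref{eqn:monodromy}), pass to the blown-up fibration $X_{g,h}[i]$, and subtract $2(i+1)$ for the blow-downs.

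For the signature your route diverges from the paper's. You try to decompose the \emph{final} factorization~(\ref{eqn:monodromy}) into recognizable $Z_h$- and $H_h$-blocks plus a ``connective'' remainder $D_{2h+2}E_{2h+2}\,(t_{x_k}t_{x'_k})$, and then argue that the remainder contributes zero. The paper instead works one step earlier: it applies Endo's fractional signature formula \cite{Endo_fractional} to the factorization of Theorem~\ref{thm:relation}, i.e.\ \emph{before} any unchaining is performed. At that stage every vanishing cycle ($c_i$, $d_j$, $e_j$) is a lift from $\Sigma_{0,2g+2}^1$ and hence manifestly invariant under the hyperelliptic involution of $\Sigma_g$, so Endo's formula applies without caveat. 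The passage from there to $X'_{g,h}[i]$ is then handled by the Endo--Nagami substitution calculus \cite{EndoNagami}, which records the signature change under each odd-chain unchaining as a fixed quantity, independent of the ambient factorization. This completely sidesteps the obstacle you flag: one never has to assign a local signature to the curves $x_k,x'_k$, nor to isolate the contribution of $D_{2h+2}E_{2h+2}$ by hand.

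Your plan is not wrong arithmetically --- the identity $(2p-i-1)\sigma(Z_h)+r\,\sigma(H_h)=-(2h+2)(2g+2)+2(i+1)(h+1)^2$ is correct --- but the step you yourself label ``the main obstacle'' is genuinely unfinished. Asserting that the connective block contributes $0$ is precisely what needs proof, and neither of your two proposed fixes is free: the branched-cover bookkeeping is the alternative corroboration the paper mentions but does not carry out in this proposition, while invoking Theorem~\ref{thm:main} makes Proposition~\ref{prop:properties} logically dependent on the main result rather than a preliminary to it. The Endo/Endo--Nagami route closes the gap without either detour.
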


\begin{proof}
These formulas can be seen in several ways, and we leave the details to the reader. For instance, the Euler characteristic follows from translating the factorization in Equation~\ref{eqn:monodromy} into the standard handlebody description of the Lefschetz fibration $X_{g,h}[i]$ obtained by blowing up $X'_{g,h}[i]$. The signature can be seen by using Endo's fractional signature to first compute the signature of the Lefschetz fibration defined by the factorization in Theorem~\ref{thm:relation} (see \cite{Endo_fractional}), and then using Endo and Nagami's results from \cite{EndoNagami} to obtain the signature of manifolds obtained by unchaining surgery.  Both formulas can also be corroborated using the branched covering description of $X'_{g,h}[i]$ developed in the next section.
\end{proof}

\begin{Rem}
At this point, it is straightforward to use the monodromy factorization in Equation~\ref{eqn:monodromy} to determine the fundamental groups of $X'_{g,h}[i]$ (they are mostly simply connected), as well as to state conditions on $g,h,$ and $i$ that determine when $X'_{g,h}[i]$ is spin, using the technique developed in \cite{BHM:Unchaining}. However, these properties also follow easily from our Theorem~\ref{thm:main} below, and we defer these statements to Corollary~\ref{cor:spin}.
\end{Rem}

\subsection{A branched cover description of $X'_{g,h}[i]$}

We now describe the manifolds  $X'_{g,h}[i]$ as a 2-fold branched covers of a rational surface. We begin by considering the Lefschetz fibration $X_{g,h}[i]$ obtained by blowing up the pencil on $X'_{g,h}[i]$ at its base points. Under the capping homomorphism $\mod(\Sigma_g^{2(i+1)}) \to \mod({\Sigma_g})$, the various curves $x_k$ and $x_k'$ in Figure~\ref{fig:Sigma_g_x} become a common curve $x$ and $x'$ and we have
$$D_{2h+2} E_{2h+2}  t_{x}^{i+1} t_{x^\prime}^{i+1} \ (t_{c_1} t_{c_2} \cdots t_{c_{2h+1}})^{(2h+2)(2p-i-1)} (t_{c_1} t_{c_2} \cdots t_{c_{2h+1}})^{2r}(t_{c_{2h+1}} \cdots t_{c_2} t_{c_1})^{2r}$$
is the identity in $\mod(\Sigma_g)$. From this relation, we express $X_{g,h}[i]$ as the 2-fold cover of $\F_1 \# (i+1) \cpb$ branched over the surface shown in Figure~\ref{fig:base_0}. In this diagram, $A_{2h+2}$ represents the ribbon surface bounded by the braid $T_{2h+2}U_{2h+2}$ as shown in Figure~\ref{fig:TUbraid}. The exponent of $(C_{2h+2})^{2p-i-1}$ denotes multiple copies of the ribbon surface $C_{2h+2}$. The red $-1$-framed 2-handles each lift to a pair of 2-handles corresponding to a pair of Dehn twists $t_x$ and $t_{x'}$ in the monodromy factorization for $X_{g,h}[i]$. The other terms in the factorization correspond to 2-handles that are lifts of bands in the various ribbon surfaces pictured. The blue $-1$-framed 2-handle appears as shown because the projection of the monodromy to $\Sigma_{0,2g+2}$ equals a single right-handed Dehn twist about all branch points; this fact follows from its derivation in Theorem~\ref{thm:relation}, and is also confirmed with the Alexander method in \cite{hadi_thesis}. The branch surface includes $2g+2$ disks in the 4-handle attached to the boundary of the ribbon surface pictured.
\begin{figure}[ht]
\begin{center}
\includegraphics[width=6in]{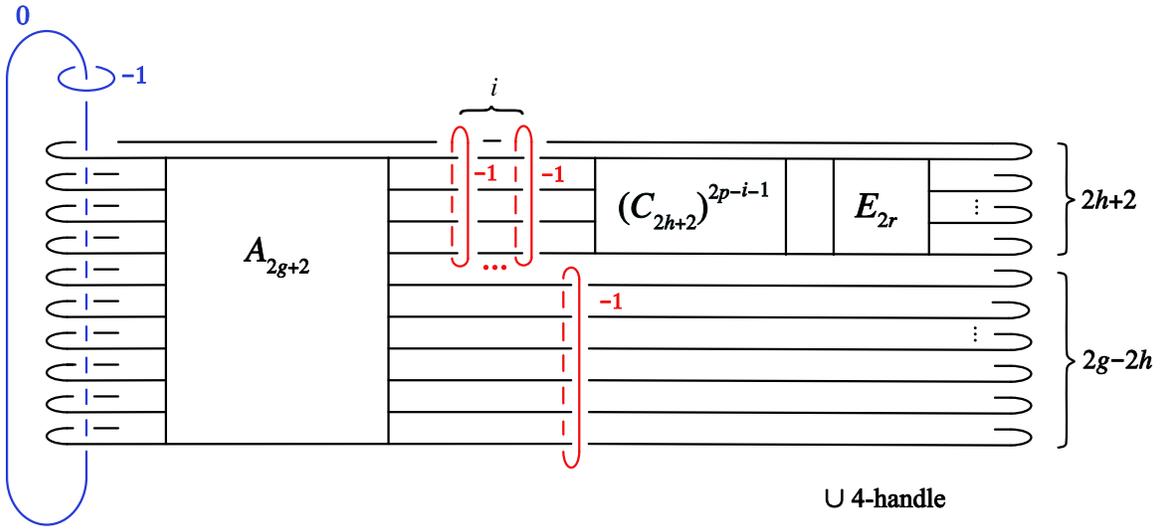}
\caption{The 2-fold branched cover is $X_{h,g}(i)$.}
\label{fig:base_0}
\end{center}
\end{figure}

We next modify the base of this branched covering, for use in the next section. We begin by repositioning the red $-1$-framed 2-handles by moving them to the right and swinging them around the back of the ribbon surface so that they appear on the left, as shown in Figure~\ref{fig:base_1}. 
\begin{figure}[ht]
\begin{center}
\includegraphics[width=6in]{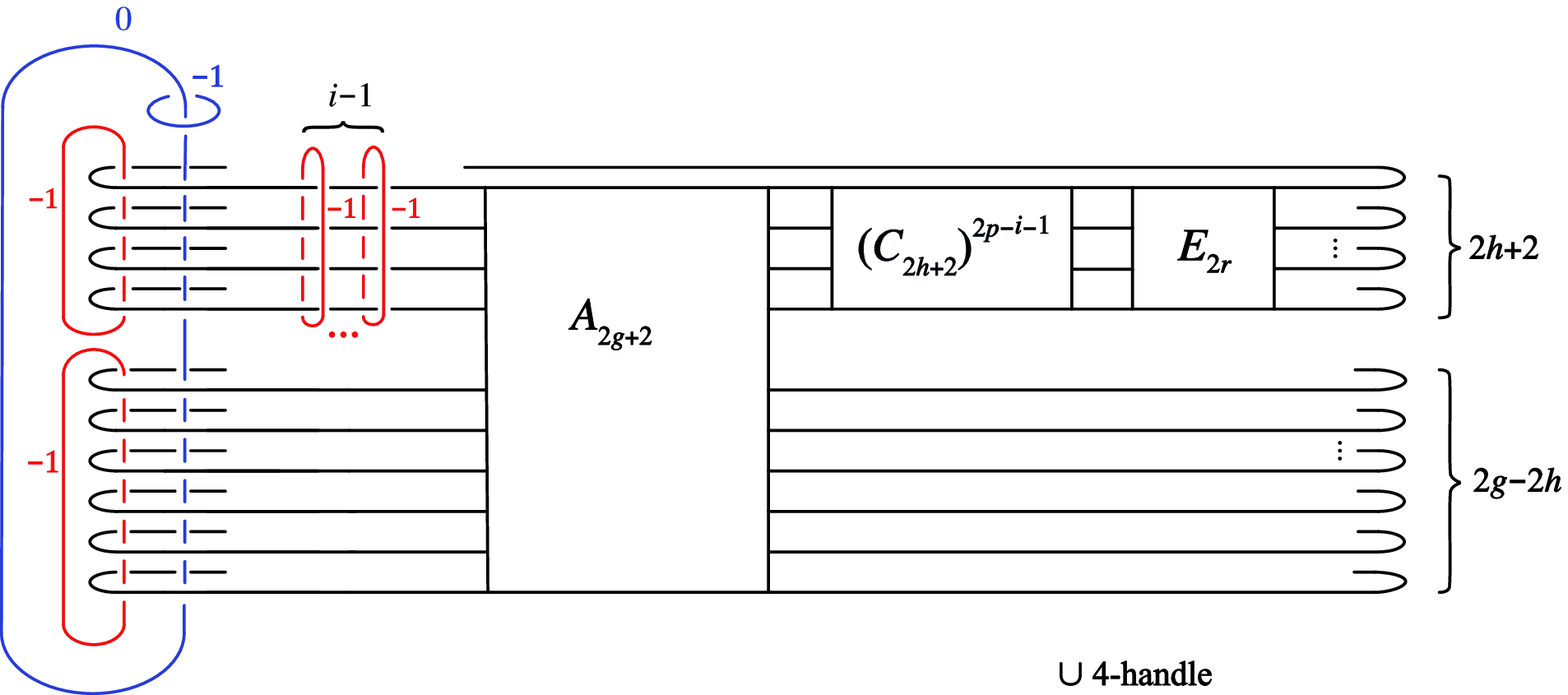}
\caption{}
\label{fig:base_1}
\end{center}
\end{figure}

We now slide one of the upper $-1$-framed 2-handle over the lower one, giving Figure~\ref{fig:base_2}. 
Figures \ref{fig:base_2} through \ref{fig:base_4} then show a sequence of handle slides, indicated by the grey bands in each picture. The steps from Figure~\ref{fig:base_1} to \ref{fig:base_4} can then be repeated for each of the $i-1$ many $-1$-framed 2-handles at the top of Figure~\ref{fig:base_1}, resulting in Figure~\ref{fig:base_5}. Two more handle slides bring us to Figure~\ref{fig:base_6}.
\begin{figure}[ht]
\centering
\begin{minipage}{.45\textwidth}
  \centering
  \includegraphics[scale=.75]{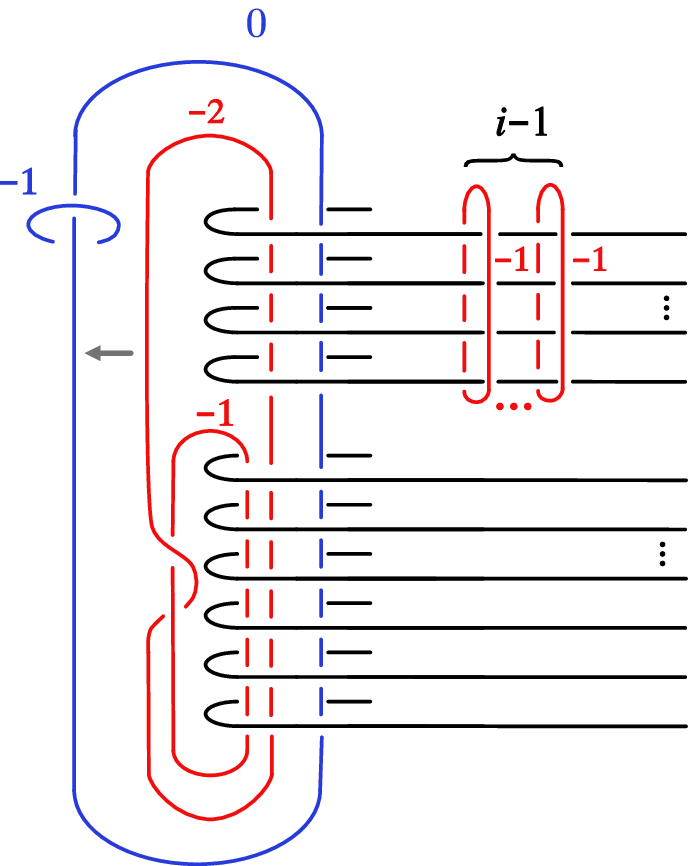}
  \caption{}
  \label{fig:base_2}
\end{minipage} \hfill
\begin{minipage}{.45\textwidth}
  \centering
  \includegraphics[scale=.75]{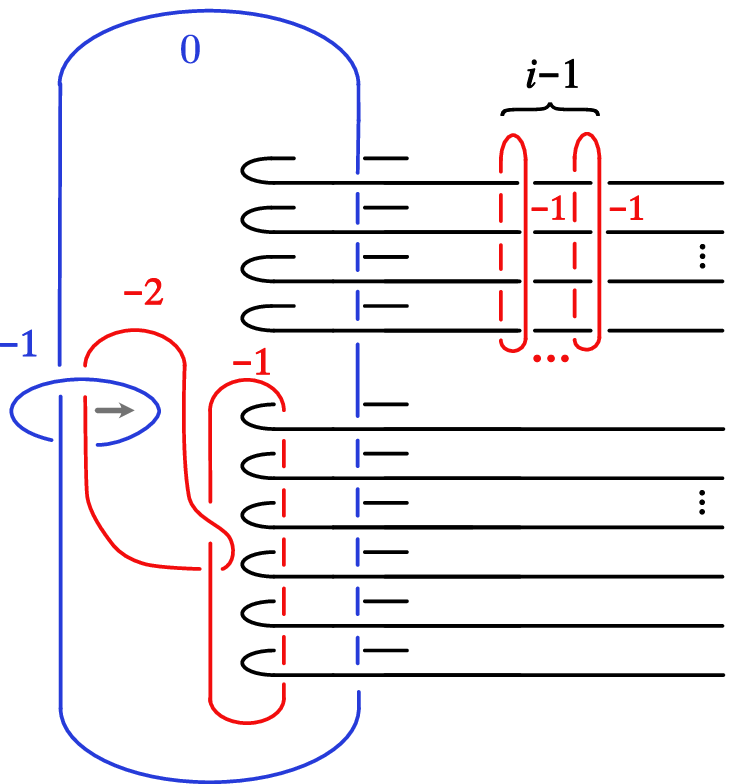}
  \caption{}
  \label{fig:base_3}
\end{minipage}
\end{figure}

\begin{figure}[ht]
\centering
\begin{minipage}{.45\textwidth}
  \centering
  \includegraphics[scale=.75]{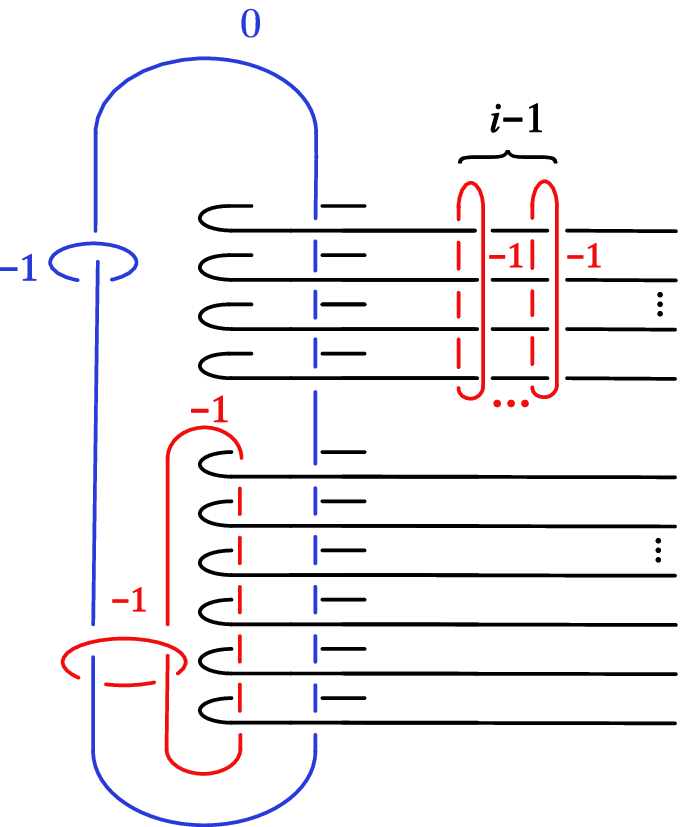}
  \caption{}
  \label{fig:base_4}
\end{minipage} \hfill
\begin{minipage}{.45\textwidth}
  \centering
  \includegraphics[scale=.75]{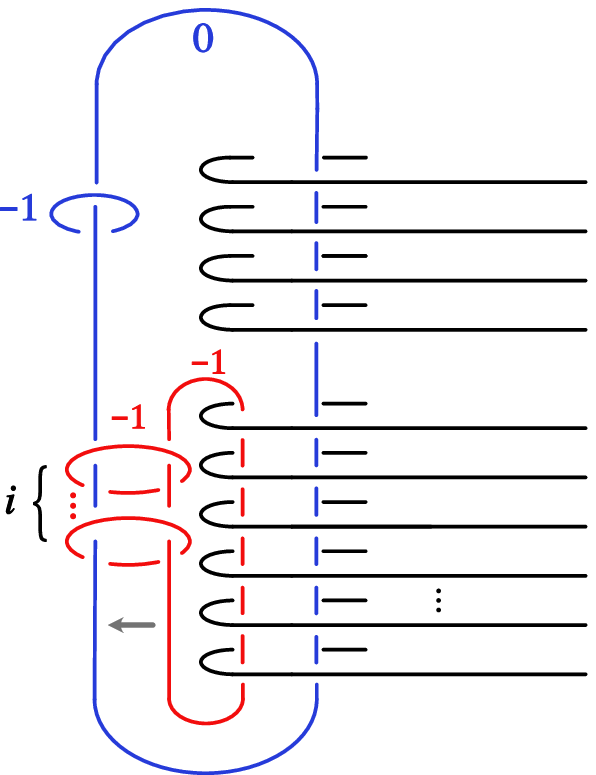}
  \caption{}
  \label{fig:base_5}
\end{minipage}
\end{figure}

\begin{figure}[ht]
\centering
\begin{minipage}{.45\textwidth}
  \centering
  \includegraphics[scale=.75]{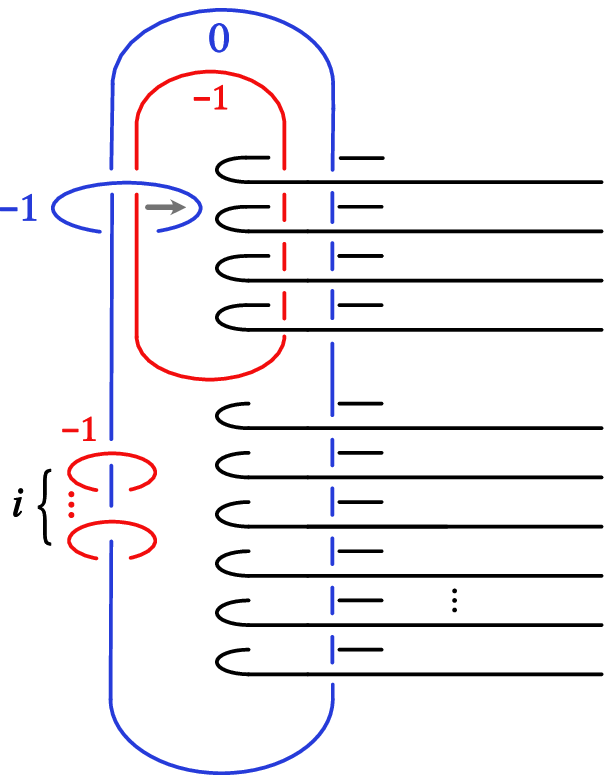}
  \caption{}
  \label{fig:base_5_b}
\end{minipage} \hfill
\begin{minipage}{.45\textwidth}
  \centering
  \includegraphics[scale=.75]{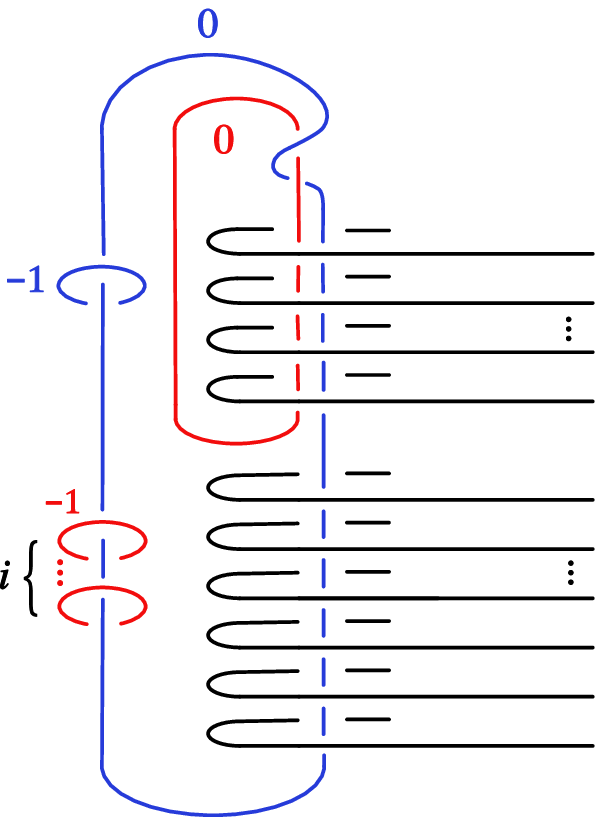}
  \caption{}
  \label{fig:base_6}
\end{minipage}
\end{figure}

We blow down each of the exceptional spheres in Figure~\ref{fig:base_6} given by the $-1$-framed 2-handles to arrive at Figure~\ref{fig:base_7}. We claim that the 2-fold branched cover of $\F_{i+1}$ branched over the pictured surface is $X_{g,h}'[i]$. Since this cover was obtained by blowing down $X_{g,h}[i]$ a total of $2(i+1)$ times, to justify this statement, we must show that each of the spheres blown down are sections of the fibration on $X_{g,h}[i]$. This can be seen by observing that the 0-framed 2-handle is fixed by the diffeomorphism that is the transition from Figure~\ref{fig:base_1} to Figure~\ref{fig:base_6}. These spheres are $-1$-framed meridians to this 2-handle in Figure~\ref{fig:base_6}, and thus when those spheres are traced back to Figure~\ref{fig:base_1} they lift to two sections of the fibration on $X_{g,h}[i]$ of square $-1$.

\begin{figure}[ht]
\begin{center}
\includegraphics[width=6in]{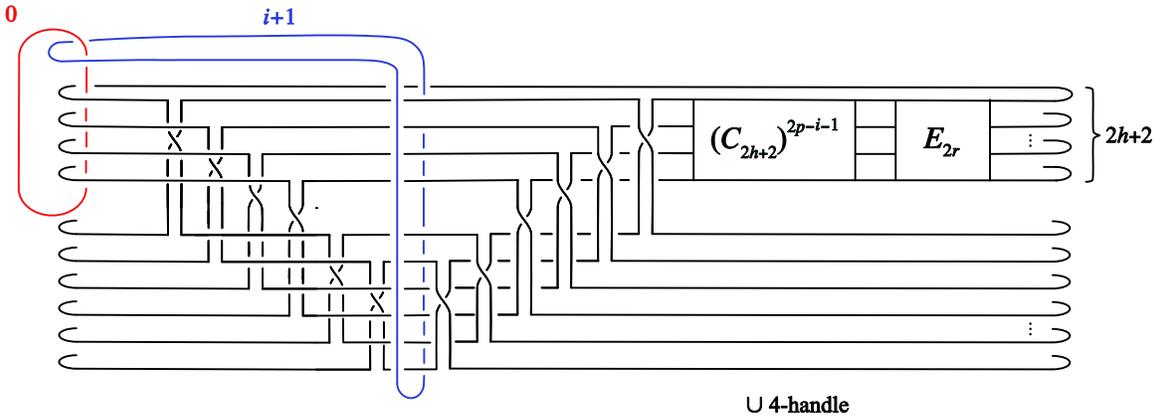}
\caption{The 2-fold branched cover is $X_{g,h}'[i]$.}
\label{fig:base_7}
\end{center}
\end{figure}

\section{From pencils to fibrations} \label{sec:theorem}

We are now ready for our main theorem, which identifies the manifolds $X_{g,h}'[i]$ as complex manifolds obtained as fiber sums of the genus $h$ Lefschetz fibrations from Section~\ref{sec:hyperelliptic}.

\begin{Thm} \label{thm:main}
Let $1 \leq h < g$, and write $2g+2$ as $2g+2=p(2h+2)+2r$, with $p>0$ and $0 \leq r <h+1$.
\begin{itemize}
\item[(a)] For all $0 \leq i < 2p-1$, $X'_{g,h}[i]$ is diffeomorphic to  $Z_h(2p-1-i) \ \#_f \ H_h(r)$. If $r=0$, then $H_h(r)$ is omitted from this expression.
\item[(b)] If $i=2p-1$ and $r \neq 0$, then $X'_{g,h}[2p-1]$ is diffeomorphic to $H_h(r)$.
\item[(c)] If $i=2p-1$ and $r = 0$, then $X'_{g,h}[2p-1]$ is diffeomorphic to $\Sigma_h \times S^2$.
\end{itemize}
\end{Thm}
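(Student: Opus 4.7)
The plan is to prove the theorem by carrying out an explicit diffeomorphism at the level of the base of the 2-fold branched cover, precisely as was done in \cite{F:Unchaining} for the case $h=1$. The starting point is the branched cover description of $X'_{g,h}[i]$ over $\F_{i+1}$ in Figure~\ref{fig:base_7}, whose branch surface has a natural decomposition into (i) the ``junction'' ribbon $A_{2h+2}$ coming from the braid $T_{2h+2}U_{2h+2}$, (ii) the $2p-1-i$ chain blocks $C_{2h+2}$ coming from the remaining odd-chain powers in Equation~\ref{eqn:monodromy}, and (iii) the ribbon $E_{2r}$ coming from the $(t_{c_1}\cdots t_{c_{2h+1}})^{2r}(t_{c_{2h+1}}\cdots t_{c_1})^{2r}$ factor (present only when $r>0$). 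The target branched covers are obtained by laying out the corresponding branch pieces from Figures~\ref{fig:Z_h} and~\ref{fig:H_h_r} linearly in a fibered base built by iterated fiber sums of Hirzebruch surfaces; the 2-fold branched cover of that base is then the fiber sum of the corresponding $Z_h$'s and $H_h(r)$ by construction.

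First I would reformat Figure~\ref{fig:base_7} as a fibered picture, stretching the base so that the branch pieces $A_{2h+2}$, $(C_{2h+2})^{2p-1-i}$, and $E_{2r}$ occupy disjoint vertical bands separated by trivial product regions, each band meeting a generic fiber sphere in $2g+2$ points. In this ``strip'' picture the blue $-1$-framed 2-handle (encoding the global full twist established in Theorem~\ref{thm:relation}) and the section data supplied by the exceptional spheres both sit in a controlled location. The key Kirby-theoretic move is then to slide the $A_{2h+2}$ ribbon across an adjacent $C_{2h+2}$ ribbon together with the framed 2-handle; one checks that the net effect in the base is to replace $A_{2h+2}\cdot C_{2h+2}$ by a single copy of $C_{2h+2}$ together with a $(-1)$-framed meridian that is then blown down. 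This is the exact generalization of the corresponding ``absorption'' move in \cite{F:Unchaining}, and it is what turns the $(2p-1-i)$-many chain blocks in the factorization plus the junction piece $A_{2h+2}$ into $(2p-1-i)$ copies of the standard $Z_h$-branch locus of Figure~\ref{fig:Z_h}, as required by part (a).

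The hardest part will be verifying that this absorption can be executed cleanly: namely, that after the isotopies required to bring $A_{2h+2}$ alongside a $C_{2h+2}$ block and perform the handle slide, no extra twisting is introduced in the remaining branch locus or in the base, so that the $E_{2r}$ block is left untouched and will match the branch locus of $H_h(r)$ in Figure~\ref{fig:H_h_r} after blowing down the remaining exceptional sphere(s). This bookkeeping is exactly the place where the subtle interaction between the monodromy of the braided branch set and the framings of the base 2-handles enters, and where one must use the Alexander-method identification of the projected monodromy obtained in Theorem~\ref{thm:relation}. Once this is established, iterating the absorption $2p-1-i$ times exhibits $X'_{g,h}[i]$ as the 2-fold branched cover of the fibered base whose branch set is $(C_{2h+2})^{2p-1-i}\,E_{2r}$, which by Figures~\ref{fig:Z_h} and~\ref{fig:H_h_r} is precisely the branched cover description of $Z_h(2p-1-i)\,\#_f\,H_h(r)$.

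For the boundary cases, when $i=2p-1$ the chain blocks are entirely consumed, and the same absorption move instead directly identifies the $A_{2h+2}$ junction with (a piece of) the $E_{2r}$ branch locus, giving the branched cover description of $H_h(r)$ when $r\neq 0$ and proving (b). When $r=0$ as well, the branch locus simplifies to a disjoint union of $2h+2$ horizontal sections of a sphere bundle over $S^2$ with no braiding at all, so its 2-fold branched cover is a trivial $\Sigma_h$-bundle over $S^2$, namely $\Sigma_h\times S^2$, establishing (c). A consistency check throughout is that the Euler characteristic and signature of the fiber sums agree with the formulas of Proposition~\ref{prop:properties} via the standard fiber sum formulas together with the values of $e(Z_h),\sigma(Z_h),e(H_h),\sigma(H_h)$ recorded in Section~\ref{sec:hyperelliptic}.
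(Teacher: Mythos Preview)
Your proposal has a genuine gap: it never explains how the branch surface, which in Figure~\ref{fig:base_7} meets each fiber $S^2$ of $\F_{i+1}$ in $2g+2$ points, becomes a branch surface meeting each fiber in only $2h+2$ points, which is what is required to recognize the genus~$h$ fiber sum $Z_h(2p-1-i)\,\#_f\,H_h(r)$.  Your ``absorption'' move $A_{2h+2}\cdot C_{2h+2}\to C_{2h+2}$ operates horizontally along the base and keeps the number of horizontal disks fixed at $2g+2$; no amount of iterating it will shrink the fiberwise branch count.  Relatedly, your iteration count $2p-1-i$ is tied to the number of $C_{2h+2}$ blocks, but in the paper nothing at all is done to those blocks; they sit inside the ``box'' of the Isotopy Lemma and ride along untouched.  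The integer $2p-1-i$ enters only at the very end as the framing produced by sliding the $(i+1)$-framed 2-handle $p$ times over the $0$-framed one.

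The actual mechanism in the paper is orthogonal to yours.  The long bands of $A_{2h+2}$ are not slid past the $C_{2h+2}$ blocks; they are used to \emph{cancel} horizontal disks.  This is packaged as the Isotopy Lemma: $\Sigma(R,S,T)\simeq\Sigma(R-k,S+1,T+k)$ with $k=2h+2$, proved by a 2-handle band dive followed by band dives and a slide over the $0$-framed handle.  Applying it $p-1$ times (and then $r$ more single-disk cancellations) reduces the $2g+2$ horizontal disks to $2h+2$, at the cost of increasing the linking number $S$ to $p$ and introducing $2g-2h$ trivial bands, which are then cancelled against disks in the 4-handle.  Only after this width reduction can the remaining $(C_{2h+2})^{2p-1-i}$ and $E_{2r}$ pieces be matched with Figures~\ref{fig:Z_h} and~\ref{fig:H_h_r}.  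Your proposed blowdowns of ``$(-1)$-framed meridians'' are also out of place: the base is already $\F_{i+1}$ with no exceptional curves, and the argument proceeds entirely by branch-surface isotopies and handle slides in that fixed base, not by further blowups or blowdowns.
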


Before addressing the proof of Theorem~\ref{thm:main}, we record two corollaries.

\begin{Cor} \label{cor:spin}
Assume $0 \leq i < 2p-1$; or, $i=2p-1$ and $r \neq 0$.
\begin{itemize}
\item[(a)] $X'_{g,h}[i]$ is simply connected.
\item[(b)] $X'_{g,h}[i]$ is spin if and only if $h$ is odd and $g$ and $i$ have the same parity.
\end{itemize}
\end{Cor}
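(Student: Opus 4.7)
The plan is to deduce both parts from Theorem~\ref{thm:main}, which realizes each $X'_{g,h}[i]$ in the stated range as an untwisted fiber sum of copies of $Z_h$ and $H_h$. For part (a), I would observe that $Z_h$ and $H_h$ each have simply connected total space and that their vanishing cycles include the chain $c_1,\ldots,c_{2h+1}$, which normally generates $\pi_1(\Sigma_h)$; a standard van Kampen argument for Lefschetz fibration fiber sums then implies that any untwisted fiber sum of such fibrations is again simply connected, giving (a).

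For part (b), the first step is to bring the fiber sum into one of the two canonical forms appearing in Section~\ref{sec:hyperelliptic}. Proposition~\ref{prop:hyperelliptic_equiv} gives $Z_h(2) \cong H_h(h+1)$, so every fiber sum of copies of $Z_h$ and $H_h$ is isomorphic as a Lefschetz fibration to $H_h(m)$ or to $Z_h(1) \#_f H_h(m-1)$ for some $m$. I would split by the parity of $i$ and read off $m$ case by case from Theorem~\ref{thm:main}: when $i < 2p-1$ is even, writing $2p-1-i = 2q+1$ yields $m = q(h+1)+r+1$; when $i<2p-1$ is odd, writing $2p-1-i=2q$ yields $m = q(h+1)+r$; and when $i = 2p-1$ with $r\neq 0$, $m = r$. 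The spinness criterion recorded after Proposition~\ref{prop:hyperelliptic_equiv} then says $X'_{g,h}[i]$ is spin if and only if $h$ is odd and $m$ is even.

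The final step is to match ``$h$ odd and $m$ even'' with the condition stated in (b). With $h$ odd, $h+1$ is even, so modulo $2$ the value of $m$ reduces to $r+1$ when $i$ is even and to $r$ when $i$ is odd; the divisibility equation $g+1 = p(h+1) + r$ gives $g \equiv r+1 \pmod 2$ when $h$ is odd, and running through the three cases identifies ``$m$ even'' with ``$g$ and $i$ have the same parity''. The main potential obstacle is nothing more than organizing the parity bookkeeping between $i, p, r$, and $g$ cleanly; once laid out, no further topological input is needed beyond the results already cited.
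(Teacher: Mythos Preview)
Your proposal is correct and follows essentially the same route as the paper's proof: both deduce simple connectivity from Theorem~\ref{thm:main} and the simple connectivity of $Z_h$ and $H_h$, and both handle part~(b) by using Proposition~\ref{prop:hyperelliptic_equiv} to reduce to the canonical forms $H_h(m)$ or $Z_h(1)\#_f H_h(m-1)$, splitting on the parity of $i$, and then translating the Endo--Nagami spinness criterion into the parity condition on $g$ and $i$ via $g+1=p(h+1)+r$. Your parametrization $2p-1-i=2q$ or $2q+1$ is a cosmetic repackaging of the paper's $p-\tfrac{i+1}{2}$ and $p-\tfrac{i+2}{2}$, and your added van Kampen remark for part~(a) simply makes explicit what the paper leaves as an immediate consequence.
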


\begin{proof} The assumptions on $i$ and $r$ avoid case (c) of Theorem~\ref{thm:main}, so $X'_{g,h}[i]$ is diffeomorphic to a fiber sum of $Z_h$ and $H_h$. Statement (a) of the Corollary follows immediately from the fact that $Z_h$ and $H_h$ are simply connected.

For part (b), recall that any fiber sum involving $Z_h$ and $H_h$ is equivalent to one of $Z_h(1) \#_f H_h(m-1)$ or $H_h(m)$, with the result spin if and only if $h$ is odd and $m$ is even (\cite{EndoNagami}).

Assume for the rest of the proof that $h$ is odd. Dividing the equation $2g+2=p(2h+2)+2r$ by $2$ shows that under this assumption, $g$ and $r$ always have opposite parity. 

Assume $i$ is odd. Then Proposition~\ref{prop:hyperelliptic_equiv} implies that $Z_h(2p-1-i) \ \#_f \ H_h(r)$ is equivalent to $$H_h((p-\tfrac{i+1}{2})(h+1)+r).$$ This is spin if and only if the expression $(p-\frac{i+1}{2})(h+1)+r$ is even.  This happens if and only if $r$ is even, or equivalently if and only if $g$ is odd.

Assume $i$ is even. Then Proposition~\ref{prop:hyperelliptic_equiv} implies that $Z_h(2p-1-i) \ \#_f \ H_h(r)$ is equivalent to $$Z_h(1) \ \#_f \ H_h((p-\tfrac{i+2}{2})(h+1)+r).$$ This is spin if and only if the expression $(p-\frac{i+2}{2})(h+1)+r$ is odd. This happens if and only if $r$ is odd, or equivalently if and only if $g$ is even. 
\end{proof}

By regarding $g$ and $i$ in the expressions in Theorem~\ref{thm:main} as functions of $p$, we see that any given fiber sum with summands $Z_h$ and $H_h$ admits infinitely many Lefschetz pencils, whose genera grow without bound as an arithmetic sequence. To state this in general, observe that by Proposition~\ref{prop:hyperelliptic_equiv}, for $h \geq 2$ any such fiber sum can be uniquely expressed as $Z_h(q) \ \#_f \ H_h(r)$, for some $0 \leq r < h+1$ and $q \geq 0$. Equating the number of $Z_h$ summands with the expression in Theorem~\ref{thm:main} gives $q=2p-i-1$, or $i=2p-1-q$. The expression $2g+2=p(2h+2)+2r$ simplifies to $g=p(h+1)+r-1$. Thus we have 

\begin{Cor} \label{cor:infinite_pencils}
Let $h \geq 1$. Assume $0 \leq r < h+1$ and $q \geq 0$.
Set $g(p)=p(h+1)+r-1$ and $i(p)=2p-1-q$. Then $Z_h(q) \ \#_f \ H_h(r)$ is diffeomorphic to $X'_{g(p),h}[i(p)]$ for all $p \geq \max \{\frac12 (q+1),2 \}$.
\end{Cor}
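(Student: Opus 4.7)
The plan is to recognize this corollary as a reparametrization of Theorem~\ref{thm:main}: we fix the target manifold $Z_h(q) \#_f H_h(r)$ and invert the parameterization of that theorem to enumerate all pencils $X'_{g,h}[i]$ realizing it. The proof therefore reduces to an algebraic check that the assignment $p \mapsto (g(p), i(p))$ lands in the range where Theorem~\ref{thm:main} applies, followed by a direct appeal to that theorem.

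First I would set up the dictionary. Given $h, q, r$ as in the hypothesis and any $p$ in the stated range, set $g = g(p) = p(h+1) + r - 1$ and $i = i(p) = 2p - 1 - q$, and observe that
\[ 2g + 2 = 2p(h+1) + 2r = p(2h+2) + 2r. \]
Since $0 \le 2r < 2h+2$, this is exactly the division relation required in Theorem~\ref{thm:main}, with quotient $p$ and (halved) remainder $r$ matching the fixed data of the corollary.

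Next I would verify that $(g,i)$ lies in the allowable domain of Theorem~\ref{thm:main}. The identity $g - h = (p-1)(h+1) + r$ shows that $g > h$ whenever either $r \ge 1$ and $p \ge 1$, or $r = 0$ and $p \ge 2$; both cases are covered by the uniform bound $p \ge 2$. The inequality $i(p) \ge 0$ is equivalent to $p \ge (q+1)/2$, which is the other half of the hypothesis. Finally, $i(p) < 2p - 1$ holds iff $q \ge 1$, while $i(p) = 2p - 1$ is precisely the borderline case $q = 0$.

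To finish, I would apply Theorem~\ref{thm:main} case by case. When $q \ge 1$, part (a) gives $X'_{g(p),h}[i(p)] \cong Z_h(2p-1-i(p)) \#_f H_h(r) = Z_h(q) \#_f H_h(r)$ (with $H_h(r)$ omitted when $r = 0$, as in the statement). When $q = 0$ and $r \ne 0$, part (b) yields $H_h(r) = Z_h(0) \#_f H_h(r)$, and when $q = 0 = r$, part (c) yields $\Sigma_h \times S^2$, the natural interpretation of the vacuous fiber sum. I do not anticipate any substantive obstacle here: the genuine topological work of constructing these diffeomorphisms is already done in Theorem~\ref{thm:main}, so the corollary is entirely a matter of matching indices and confirming that the stated range of $p$ is precisely the range where the hypotheses of Theorem~\ref{thm:main}(a)--(c) hold.
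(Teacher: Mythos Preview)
Your proposal is correct and follows essentially the same approach as the paper: the corollary is derived by inverting the parametrization of Theorem~\ref{thm:main}, solving $q=2p-1-i$ and $2g+2=p(2h+2)+2r$ for $i$ and $g$, with the bound on $p$ ensuring $i\ge 0$ and $g>h$. Your treatment is in fact slightly more thorough than the paper's, since you explicitly verify the boundary cases $q=0$ via parts (b) and (c) of Theorem~\ref{thm:main}, whereas the paper's justification refers only to part (a).
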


\begin{Rem} The condition $p \geq \max \{\frac12 (q+1),2 \}$ guarantees that $i \geq 0$ and $g>h$ for all $p$.
\end{Rem}

\subsection{An isotopy lemma} \label{sec:isotopy_lemma}
To prove Theorem~\ref{thm:main}, we start with a lemma which describes an isotopy between two ribbon surfaces embedded in a Hirzebruch surface. The surfaces are given by banded unlink diagrams, and the isotopy makes use of some standard moves on these diagrams, a complete list of which is given in \cite{BandedUnlink}. We will make use of two iterations of these moves, previously described in  \cite{F:Unchaining}, where they were referred to as {\em band dives} and {\em 2-handle band dives}, respectively. These moves are summarized in Figures \ref{fig:band_dive} and \ref{fig:2h_band_dive}; they will be used by replacing the initial ribbon surface in each figure with the final one. 
\begin{figure}[ht!]
\begin{center}
\includegraphics[scale=.7]{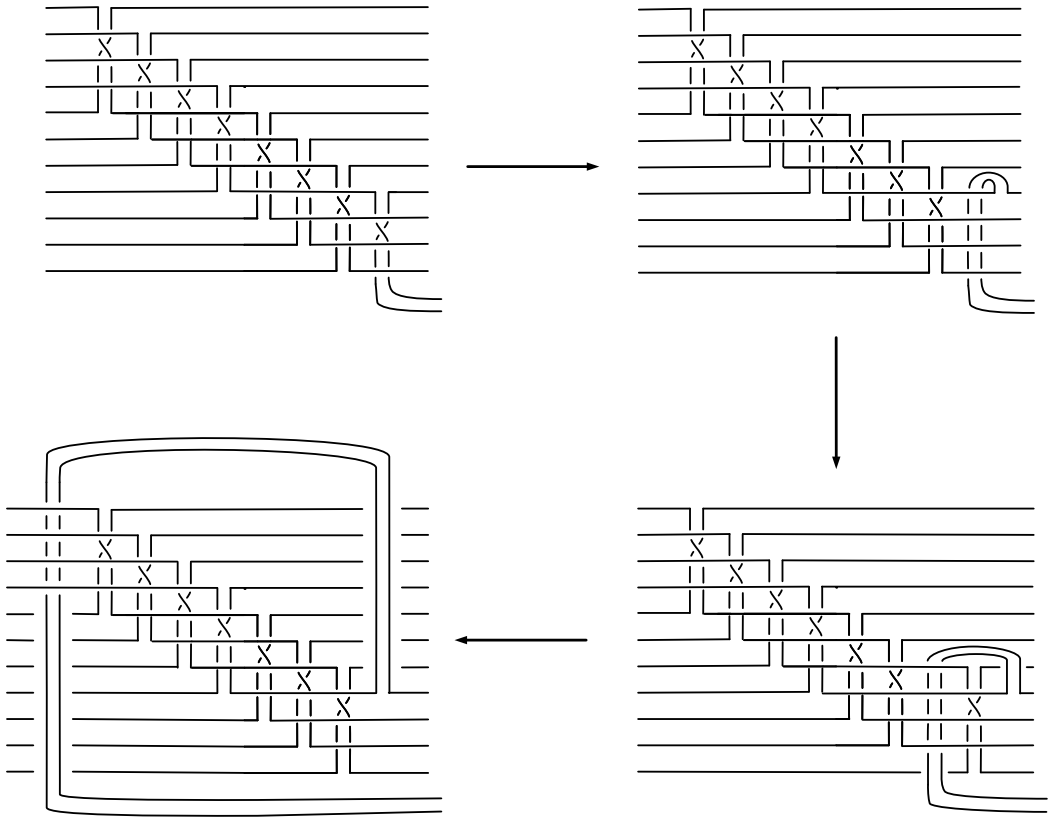}
\caption{A band dive}
\label{fig:band_dive}
\end{center}
\end{figure}
\begin{figure}[ht!]
\begin{center}
\includegraphics[width=5.5in]{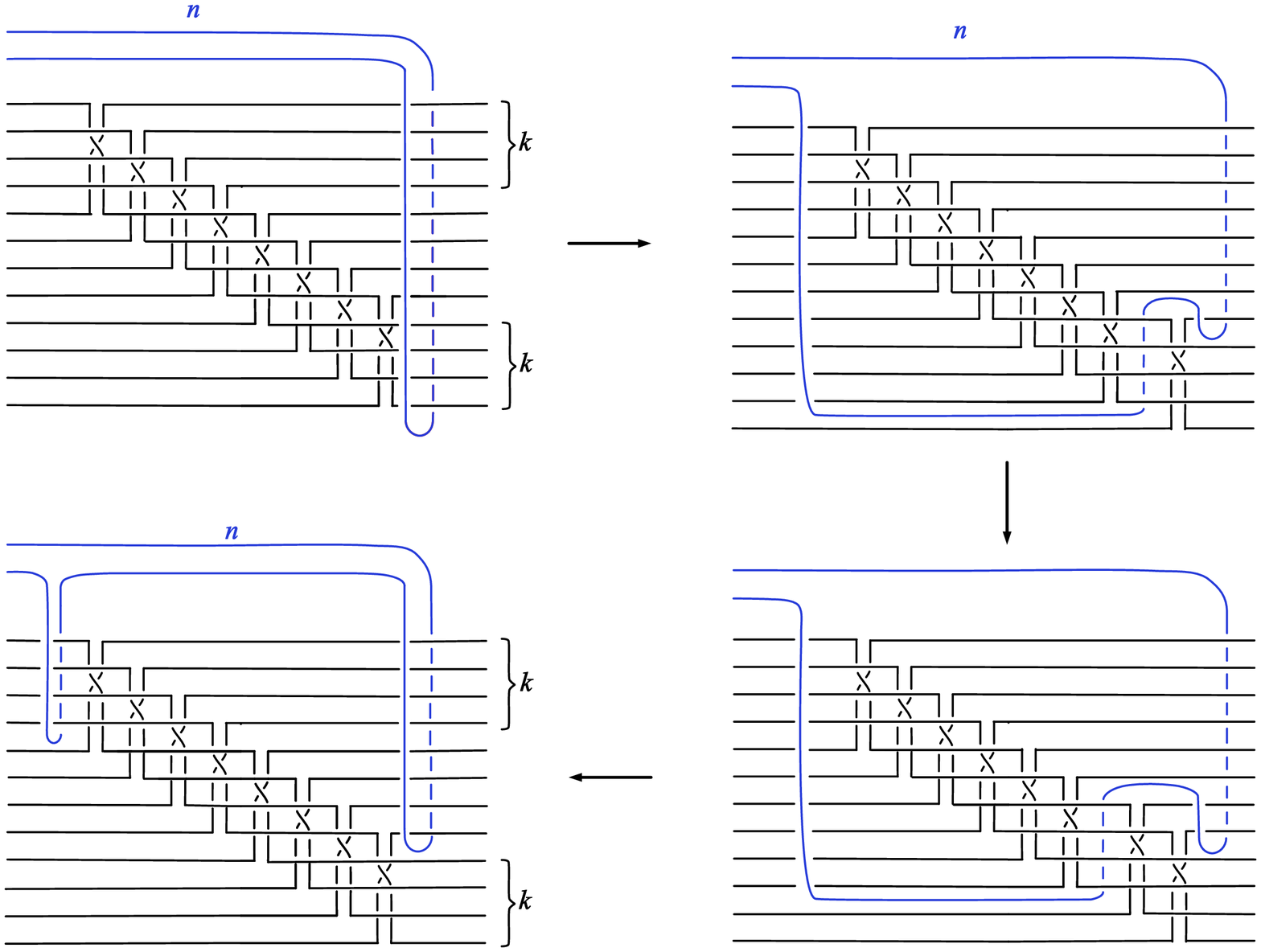}
\caption{A 2-handle band dive}
\label{fig:2h_band_dive}
\end{center}
\end{figure}

Let $\Sigma(R,S,T)$ denote a ribbon surface in the $n$th Hirzebruch surface $\F_n$ of the form shown in Figure~\ref{fig:lemma_0}. The ``length'' of each long band in the surface is a fixed constant $k$, which we suppress from the notation. The box represents any collection of bands subject to the requirement that they are confined to that region. The parameters $R, S$ and $T$ are explained in the caption to Figure~\ref{fig:lemma_0}.

\begin{figure}[ht!]
\begin{center}
\includegraphics[width=6in]{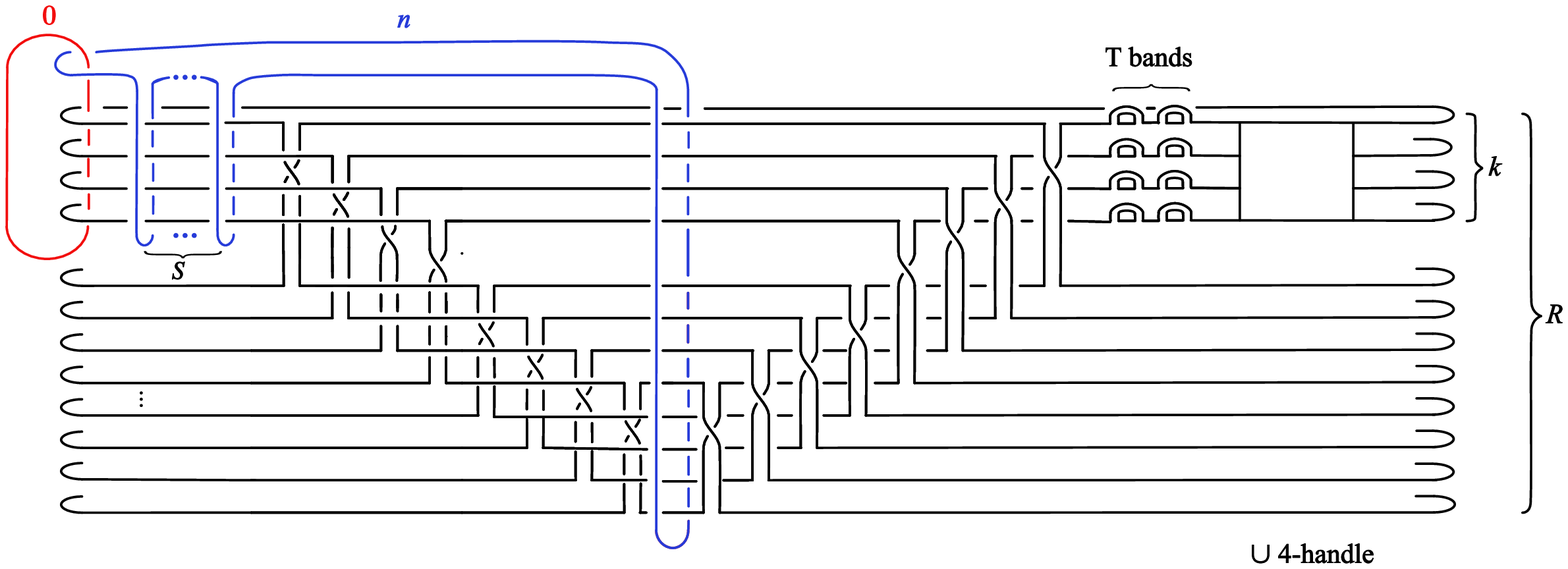}
\caption{The ribbon surface $\Sigma(R,S,T)$ has $R$ horizontal disks. The $n$-framed attaching circle links the horizontal disks $S$ times positively in the indicated region. There are $T$ trivial bands attached to the top $k$ horizontal disks.}
\label{fig:lemma_0}
\end{center}
\end{figure}

\begin{Lem}[The Isotopy Lemma] \label{lem:recursive} Let $k\geq 2$. For any $R \geq 2k$, the ribbon surface $\Sigma(R,S,T)$ is isotopic to the ribbon surface $\Sigma(R-k, S+1, T+k)$.
\end{Lem}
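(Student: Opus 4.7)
The plan is to construct an explicit sequence of band dives and 2-handle band dives on the banded unlink diagram for $\Sigma(R,S,T)$ that realizes the claimed isotopy. The target transformation performs three coordinated changes at once: it removes $k$ horizontal disks, pushes the $n$-framed attaching circle once more around the remaining horizontal disks (incrementing $S$), and leaves behind $k$ new trivial bands on the top $k$ disks of the reduced configuration. The hypothesis $R \geq 2k$ is what permits the relevant $k$ horizontal disks to be isolated beneath the top $k$ without forcing any interaction with the trivial bands in that upper region.

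First I would identify the $k$ horizontal disks that sit immediately below the top $k$ disks; these exist precisely because $R \geq 2k$. These are the disks that will be absorbed in the isotopy. Using a sequence of band dives as in Figure~\ref{fig:band_dive}, I would slide the long bands attached to those $k$ disks so that they travel past the contents of the box (which is preserved throughout) and come to lie adjacent to the $n$-framed 2-handle. The top $k$ disks, together with their $T$ pre-existing trivial bands, remain rigidly in place during this step, since every band dive used is supported away from them.

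Next, I would perform a single 2-handle band dive, of the type in Figure~\ref{fig:2h_band_dive}, that carries those $k$ repositioned bands across the $n$-framed 2-handle in one coordinated move. By the rules recorded in \cite{F:Unchaining}, this has exactly two effects on the diagram: it introduces one additional positive linking of the attaching circle with the horizontal disks (raising $S$ to $S+1$), and it reorganizes the dived bands so that, after pulling the now-freed $k$ disks across the 2-handle, they appear as $k$ short ribbon arcs joining those disks to the disks that remain. A final sequence of band isotopies then converts the $k$ disks plus their attaching arcs into $k$ trivial bands attached to the top $k$ of the remaining $R-k$ horizontal disks, producing the ribbon surface $\Sigma(R-k,S+1,T+k)$.

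The main obstacle I anticipate is the bookkeeping: I must verify that the 2-handle band dive produces exactly one new positive linking (not $k$), that the $k$ disks being eliminated detach cleanly and convert into precisely $k$ trivial bands at the top, and that the contents of the box and the $T$ existing trivial bands are untouched. This requires tracking each intermediate banded unlink diagram carefully and crucially uses the uniform length $k$ of the long bands, which matches the number of disks being removed and the number of trivial bands being created. Once each intermediate diagram is pinned down, the isotopy follows from the standard move list in \cite{BandedUnlink} together with the two derived moves from \cite{F:Unchaining}.
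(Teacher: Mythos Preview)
Your proposal has the right endpoints but misidentifies the key move and omits the step that actually makes the isotopy go through. The 2-handle band dive does not ``carry $k$ bands across the $n$-framed 2-handle.'' In \cite{F:Unchaining} and in the paper it is a move on the attaching circle of the $n$-framed handle: that circle dives past a portion of the ribbon surface and in doing so picks up exactly one extra positive linking with the stack of horizontal disks, whence $S\to S+1$. It is performed once, at the very beginning, before any bands have been repositioned, and its only purpose is to clear the $n$-framed attaching circle out of the way. It does not reorganize the long bands, and there is no version of it that processes $k$ bands simultaneously.

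More seriously, your plan never invokes the $0$-framed 2-handle. After the $n$-framed circle has been moved, each outside long band is still linked with the upper $k$ horizontal disks, and no sequence of band dives confined to the ribbon surface will undo that linking. The paper removes one disk at a time: a band slide and a band dive reposition one outside band, and then a slide of that band over the $0$-framed 2-handle is what finally untangles it from the upper disks so it can be isotoped to a trivial band; only then does the matching inside band cancel the lowest remaining horizontal disk. This cycle is repeated $k$ times. Your ``single coordinated move'' followed by unspecified ``band isotopies'' cannot substitute for the $0$-framed slide: without it the outside bands stay linked with the upper disks and the $k$ trivial bands you need never materialize.
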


\begin{proof}
The strategy of the proof is to use the long bands on the left side of Figure~\ref{fig:lemma_0} -- the ones that pass inside the horizontal disks -- to cancel the lower horizontal disks. To do so, we must first move the $n$-framed attaching circle and the outside long bands out of the way.

To execute this strategy, we use a 2-handle band dive to arrive at Figure~\ref{fig:lemma_1}. Note that $S$ changes to $S+1$. 
\begin{figure}[ht]
\begin{center}
\includegraphics[width=6in]{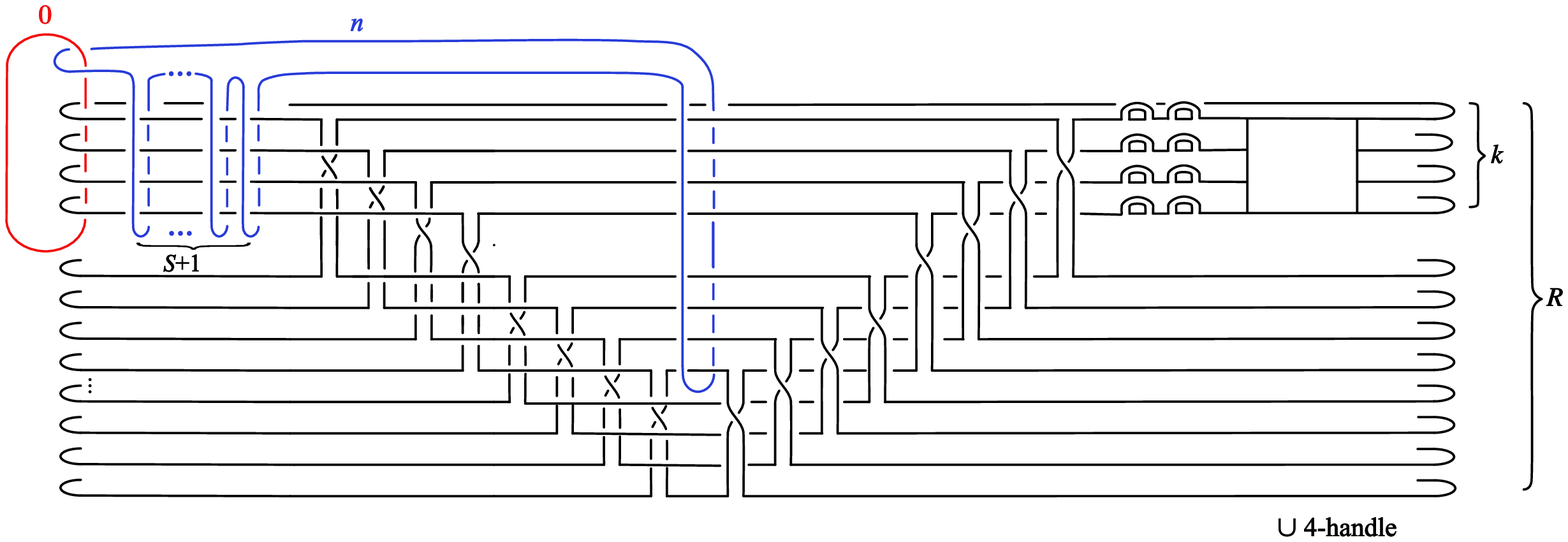}
\caption{}
\label{fig:lemma_1}
\end{center}
\end{figure}
We next use a band slide to obtain Figure~\ref{fig:lemma_2}, followed by a band dive to arrive at Figure~\ref{fig:lemma_3}. 
\begin{figure}[ht]
\begin{center}
\includegraphics[scale=.7]{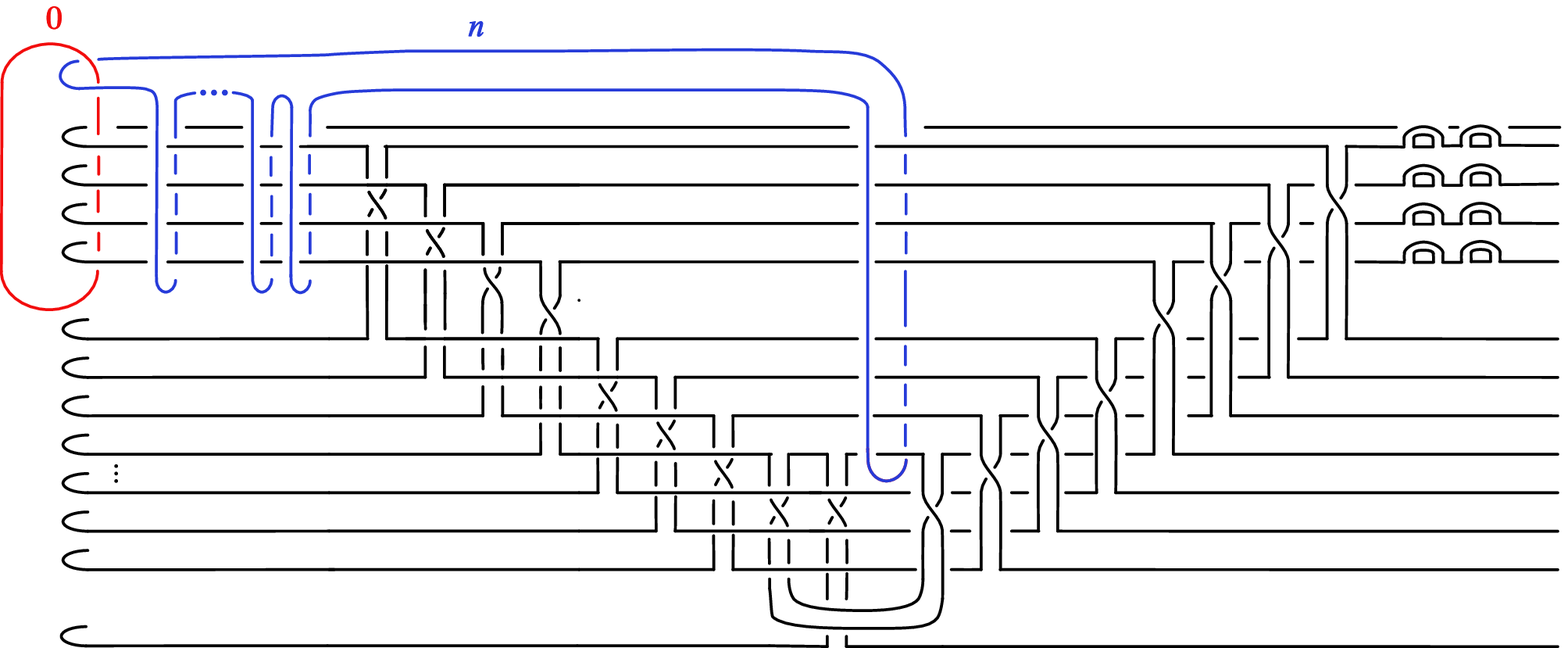}
\caption{}
\label{fig:lemma_2}
\end{center}
\end{figure}

\begin{figure}[ht]
\begin{center}
\includegraphics[scale=.7]{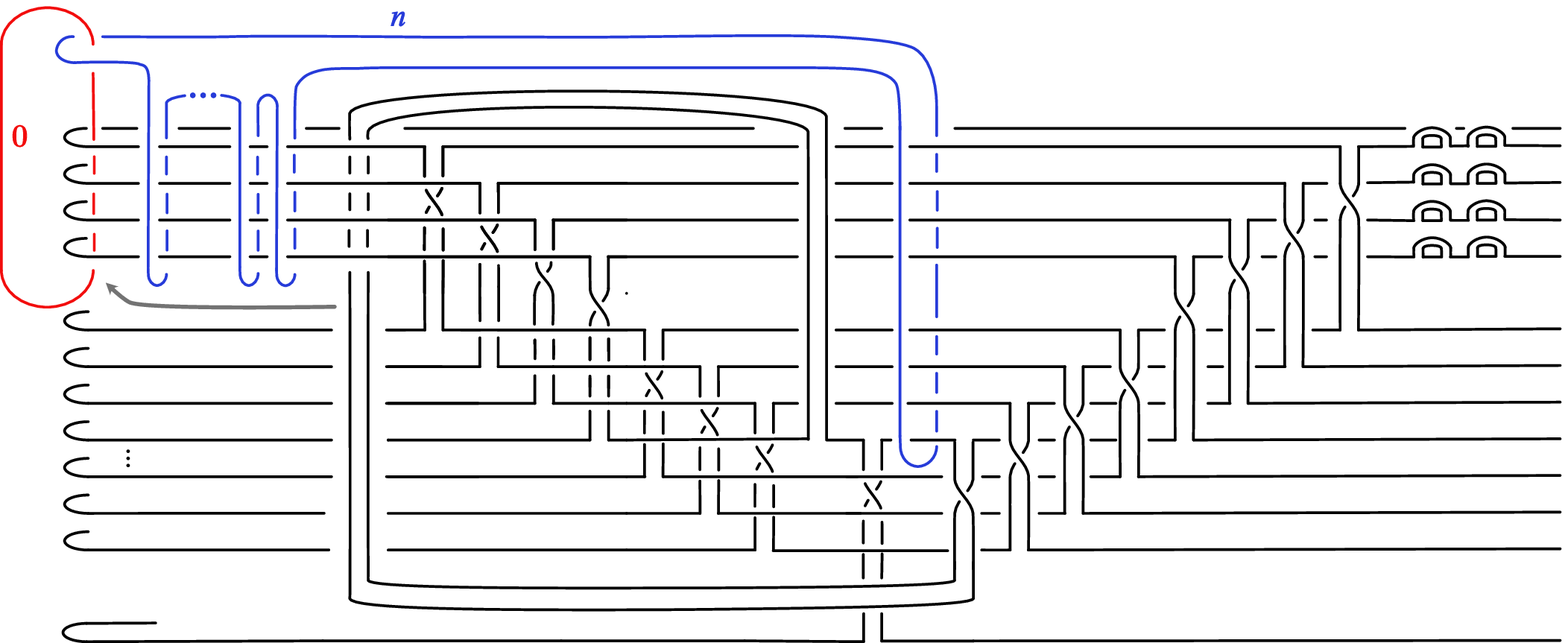}
\caption{}
\label{fig:lemma_3}
\end{center}
\end{figure}
A 2-handle band slide over the 0-framed 2-handle, using the indicated band untangles the band from the upper $k$ horizontal disks, after which it is isotoped to the location shown in Figure~\ref{fig:lemma_4}. This Figure also shows one fewer horizontal disk, as we are now able to cancel it with the remaining attached inside band.
\begin{figure}[ht]
\begin{center}
\includegraphics[scale=.7]{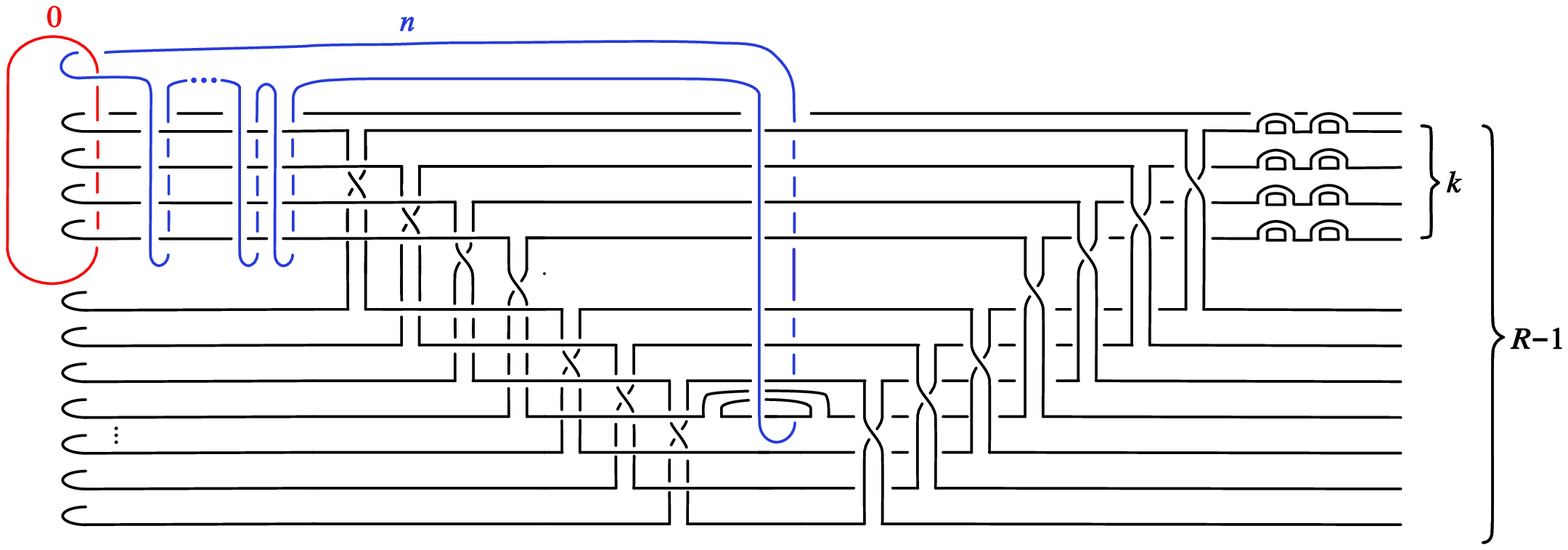}
\caption{}
\label{fig:lemma_4}
\end{center}
\end{figure}

We now repeat the steps between Figures~\ref{fig:lemma_1} and Figure~\ref{fig:lemma_4} $k-1$ times, cancelling $k-1$ more horizontal disks while adding $k-1$ more trivial bands. The outcome is shown in Figure~\ref{fig:lemma_5}. (We have also slid all of the newly introduced trivial bands to the upper $k$ rows of disks.) Altogether, the triple $(R,S,T)$ has been replaced by the triple $(R-k,S+1,T+k)$, proving the Lemma.
\begin{figure}[ht]
\begin{center}
\includegraphics[scale=.7]{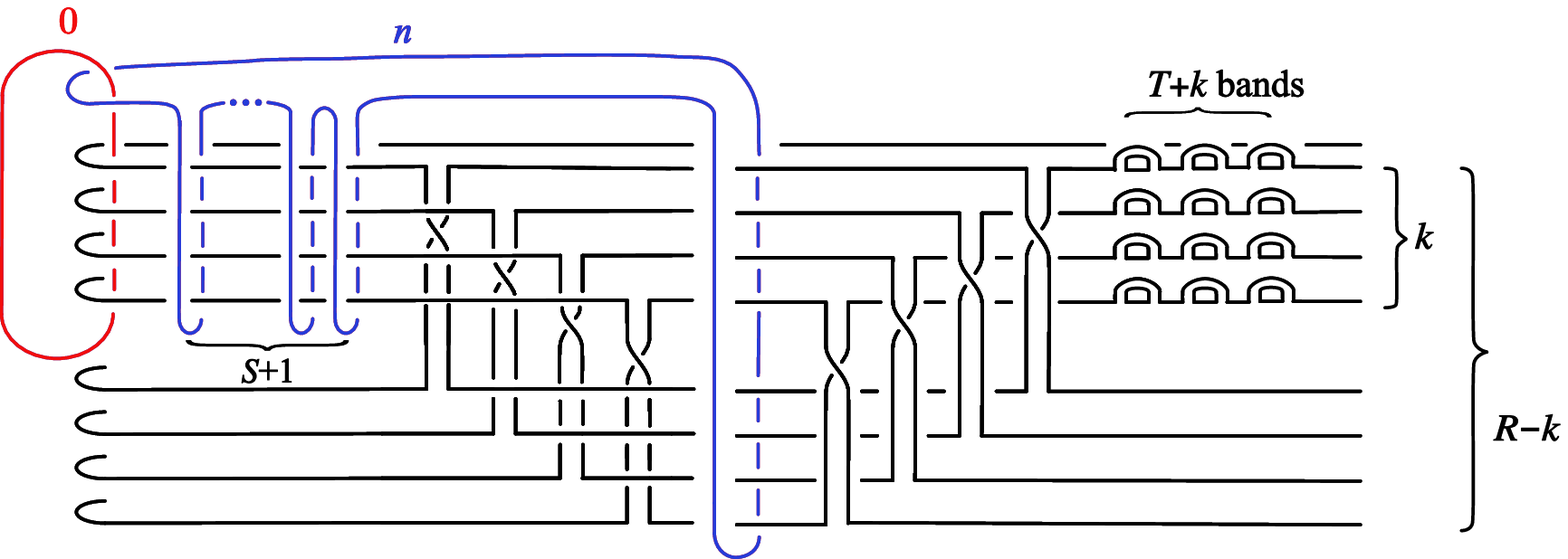}
\caption{}
\label{fig:lemma_5}
\end{center}
\end{figure}
\end{proof}

\subsection{Proof of Theorem \ref{thm:main}} \label{sec:proof}
Observe that in the notation of the Isotopy Lemma of Section~\ref{sec:isotopy_lemma}, the branch surface in Figure~\ref{fig:base_7} is of the form $\Sigma(2g+2,0,0)$, with $k=2h+2$. Applying the Isotopy Lemma $p-1$ times  gives that $X_{g,h}'[i]$ is the 2-fold branched cover of  $\F_{i+1}$ branched over $\Sigma(2h+2+r,p-1,(p-1)(2h+2))$, as seen in Figure~\ref{fig:base_8}. 
\begin{figure}[ht]
\begin{center}
\includegraphics[scale=.8]{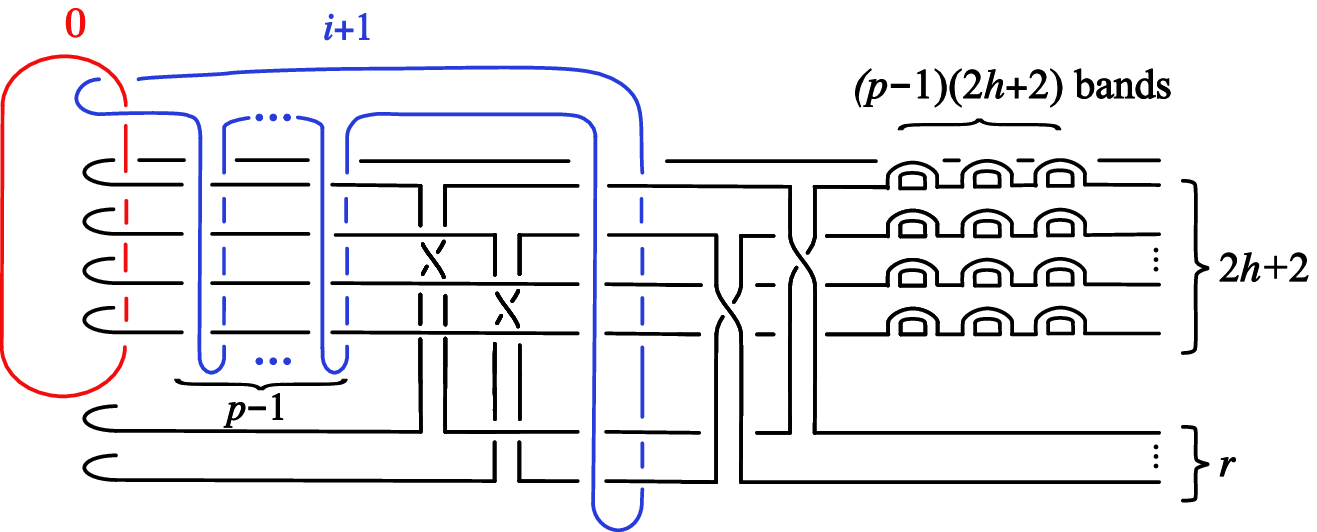}
\caption{}
\label{fig:base_8}
\end{center}
\end{figure}

We continue the process to cancel the bottom $r$ horizontal disks. If $r=0$, these disks and the long bands are not there, and these steps are unnecessary. Otherwise, a 2-handle band dive results in Figure~\ref{fig:base_9}.
\begin{figure}[ht]
\begin{center}
\includegraphics[scale=.8]{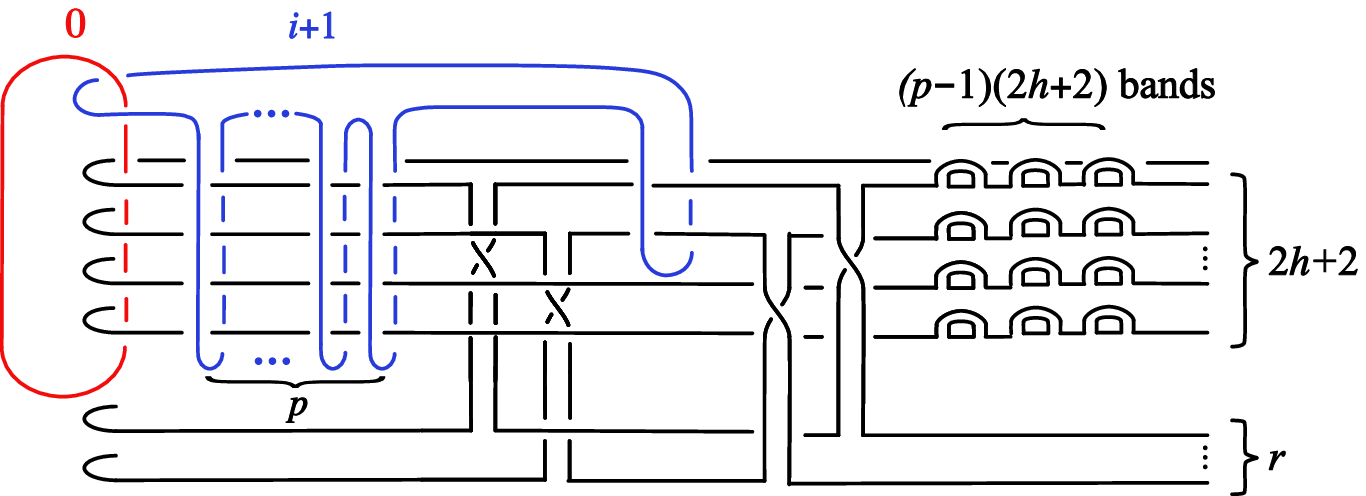}
\caption{}
\label{fig:base_9}
\end{center}
\end{figure}
We then iterate $r$ many times the moves from Figure~\ref{fig:lemma_1} to Figure~\ref{fig:lemma_4} to remove these bottom disks, at the expense of adding $r$ more trivial bands to the branch surface. There are now $(p-1)(2h+2)+r=2g-2h$ trivial bands in total. Cancelling all of them with disks in the 4-handle leaves $2h+2$ remaining in the 4-handle of the result, shown in Figure~\ref{fig:base_10}. Finally we slide the $(i+1)$-framed 2-handle $p$ times over the 0-framed handle, resulting in Figure~\ref{fig:base_11}.
\begin{figure}[ht]
\begin{center}
\includegraphics[scale=.85]{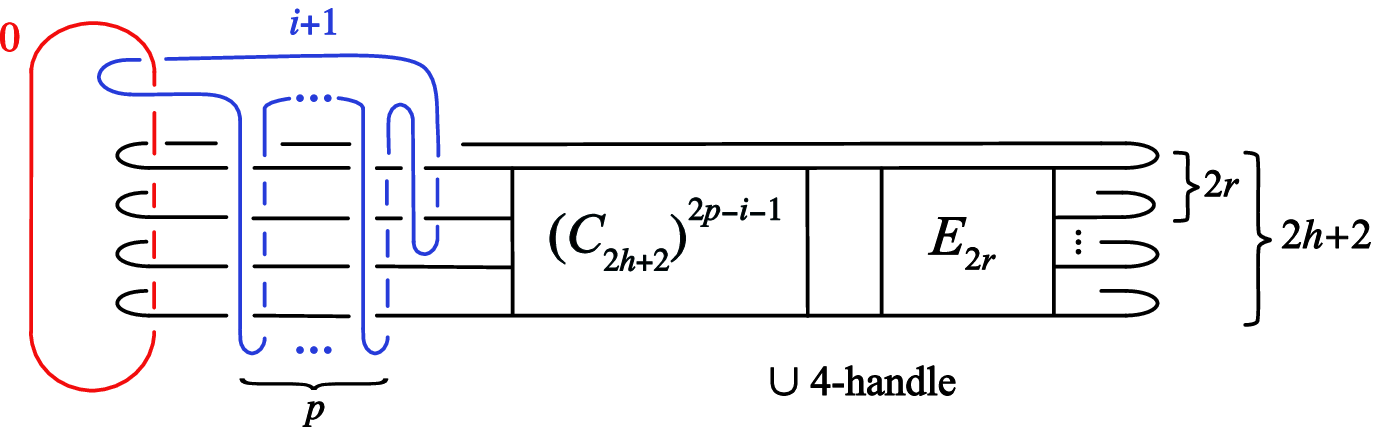}
\caption{}
\label{fig:base_10}
\end{center}
\end{figure}
\begin{figure}[ht]
\begin{center}
\includegraphics[scale=.85]{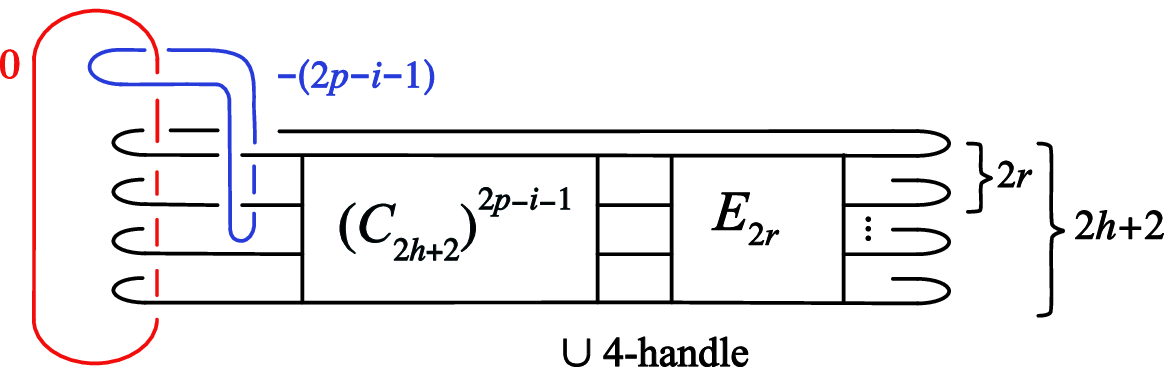}
\caption{The 2-fold branched cover is $Z_h(2p-1-i) \ \#_f \ H_h(r)$.}
\label{fig:base_11}
\end{center}
\end{figure}


Comparing Figure~\ref{fig:base_11}  with the branched cover descriptions of $Z_h$ and $H_h(r)$ in Figures~\ref{fig:Z_h} and \ref{fig:H_h_r} shows that the 2-fold branched cover of Figure~\ref{fig:base_11}  describes the fiber sum $Z_h(2p-1-i) \ \#_f \ H_h(r)$.

Note that when $i=2p-1$, the boxed collection of bands $(C_{2h+2})^{2p-i-1}$ is omitted from the figures, and Figure~\ref{fig:base_11} reduces to Figure~\ref{fig:H_h_r}; the cover is $H_h(r)$, as required. When $r=0$, the collection  $E_{2r}$ is omitted from the figures. Furthermore, it is straightforward to see that when $r=0$, omitting the unecessary steps in the proof will remove the linking of the $-(2p-i-1)$-framed 2-handle in Figure~\ref{fig:base_11} with the ribbon surface. Thus in this case, comparing Figure~\ref{fig:base_11} to Figure~\ref{fig:Z_h} shows that the 2-fold branched cover is $Z_h(2p-i-1)$, as required. 

In the case where {\em both} $i=2g-1$ and $r=0$, the description of $X'_{g,h}[i]$ as a branched cover omits both boxed collections of bands $(C_{2h+2})^{2p-i-1}$ and $E_{2r}$ from Figure~\ref{fig:base_7}. Then Figure~\ref{fig:base_11} reduces to Figure~\ref{fig:base_12}, which describes the cover $\Sigma_h \times S^2 \to S^2 \times S^2$ obtained as the product of the 2-fold cover $\Sigma_h \to \Sigma_{0,2h+2}$ with the identity. This completes the proof of Theorem~\ref{thm:main}.
\begin{figure}[ht]
\begin{center}
\includegraphics[scale=.85]{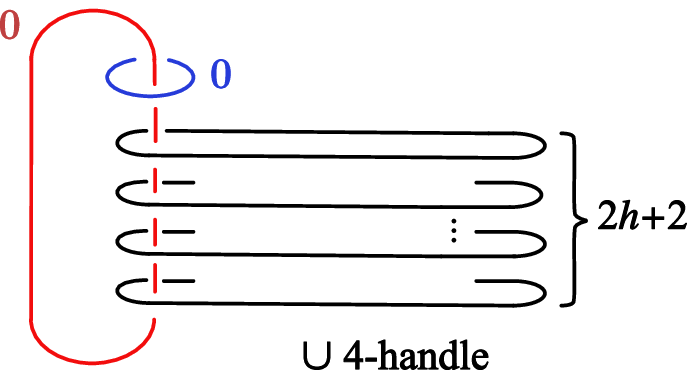}
\caption{The 2-fold branched cover is $\Sigma_h \times S^2$.}
\label{fig:base_12}
\end{center}
\end{figure}

\section{Remarks} \label{sec:remarks}

\subsection{Degree Doubling} \label{subsec:doubling}
Corollary~\ref{cor:infinite_pencils} describes an infinite family of pencils whose fiber genus grows without bound on a fixed symplectic $4$-manifold. This invites comparison with {\em degree doubling}, a construction in which a Donaldson pencil on a symplectic $(X,\omega)$ with fiber class Poincare dual to $d[\omega]$ gives rise to a higher genus pencil on $X$ with fiber class dual to $2d[\omega]$. This operation was studied by Ivan Smith in \cite{Smith_divisors}, and by Denis Auroux and Ludmil Katzarkov for pencils described as branched covers of $\cp$ in \cite{AurouxKatzarkov}. Degree doubling was later shown by Baykur to be an operation applicable to any topological genus $g$ Lefschetz pencil, provided the number of base points does not exceed $2g-2$ (\cite{Baykur_inequivalent}).

It is clear that on a given genus $h$ hyperelliptic Lefschetz fibration, the pencils produced by Corollary~\ref{cor:infinite_pencils} are more abundant than those generated by degree doubling, as their genus grows according to an arithmetic sequence  (with common difference $h+1$),  whereas doubling increases genus exponentially. One may still wonder if pencils generated by doubling occur as a subsequence of our construction. A simple observation shows that this is not the case. In degree doubling, a Lefschetz pencil of genus $g$ with $b$ base points is doubled to become a pencil of genus $g'=2g+b-1$ with $b'=4b$ base points. In particular, the resulting number of base points is always a multiple of $4$. However, this does not hold for some families of pencils described by Corollary~\ref{cor:infinite_pencils}: for instance, $Z_2(1)$ is diffeomorphic to each pencil in the sequence $X'_{3p-1,2}[2p-2]$ for $p \geq 2$, which have $2(i+1) =2(2p-1)=4p-2$ base points, a number never divisible by $4$.

\subsection{Horikawa surfaces}
The even chain relation $(t_{c_1} t_{c_2}t_{c_3} t_{c_4})^{10}=1$ in $\mod(\Sigma_2)$ corresponds to a genus $2$ Lefschetz fibration on a complex surface $Y_2$. For $\ell \geq 3$ and odd, the fiber sums $H_2(2\ell)$ and $Y_2(\ell)$ are homeomorphic, and it remains a long standing open question to determine whether they are diffeomorphic and/or symplectomorphic. (For $\ell$ even they are not homeomorphic. For $\ell=1$, they are homeomorphic but not diffeomorphic \cite{genus2}.)

In \cite{Auroux_Horikawa}, Auroux derived explicit monodromy factorizations for the canonical Lefschetz pencils on both $H_2(6)$ and $Y_2(3)$. Each was seen to be a genus 17 pencil with 16 base points and 196 nodal fibers, and Auroux shows that in fact they are related by a partial twisting operation (or, equivalently, by a Luttinger surgery). Moreover, the corresponding monodromy groups are isomorphic, in contrast to the monodromy groups of the genus 2 fibrations on these manifolds. 

By Corollary~\ref{cor:infinite_pencils}, we see that $H_2(6)$ is diffeomorphic to $X'_{17,2}[7]$ constructed here, which is also a genus 17 pencil with 16 base points and 196 nodes. The factorization for Auroux's pencil (\cite{Auroux_Horikawa}, Theorem 4.4) appears quite a bit different from that in our Theorem~\ref{thm:manifolds} (with $g=17, h=2, i=7$). It is an interesting question to determine if these genus 17 pencils on $H_2(6)$ are Hurwitz equivalent. Moreover, the infinite family of Lefschetz pencils on $H_2(6)$ derived from Corollary~\ref{cor:infinite_pencils} includes some of lower genus, namely $X'_{8,2}[1], X'_{11,2}[3]$ and $X'_{14,2}[5]$. These ``smaller'' pencils may prove useful in studying the differential topology of $H_2(6)$.

Finally, we speculate that there may be a result similar to Theorem~\ref{thm:main} which constructs families of Lefschetz pencils on manifolds defined from the hyperelliptic Lefschetz fibration corresponding to the even chain relation in $\mod(\Sigma_h)$. Such a result would provide pencils which could be compared to Auroux's canonical pencil on $Y_2(3)$ (\cite{Auroux_Horikawa}, Theorem 3.2), and may also supply new smaller genus pencils on that manifold.

\end{document}